\numberwithin{equation}{section}
\newtheorem{theorem}{Theorem}[section]
\newtheorem{proposition}[theorem]{Proposition}
\newtheorem{corollary}[theorem]{Corollary}
\theoremstyle{definition}
\newtheorem{example}[theorem]{Example}
\newtheorem{definition}[theorem]{Definition}
\newtheorem{examples}[theorem]{Examples}
\newtheorem{remark}[theorem]{Remark}
\newtheorem{classification}[theorem]{Classification}
\newcommand{\cU}{\mbox{${\cal U}$}}
\title{\textbf{Koszulity for skew PBW extensions over fields}}
\author{H\'ector Su\'arez\footnote{Seminario de \'Algebra Constructiva - $SAC^2$, Departamento de Matem\'aticas,
Universidad Nacional de Colombia - sede Bogot\'a.} \footnote{Escuela de Matem\'aticas y Estad\'{\i}stica,
Universidad Pedag\'ogica y Tecnol\'ogica de Colombia - sede Tunja. } \\Armando Reyes \footnote{Seminario de \'Algebra Constructiva - $SAC^2$, Departamento de Matem\'aticas,
Universidad Nacional de Colombia - sede Bogot\'a.}}
\date{}
\begin{document}
\maketitle
\begin{abstract}
\noindent Koszul and homogeneous Koszul algebras were defined by Priddy in \cite{Priddy1970}. There
exist some relations between these algebras and the skew PBW  extensions introduced in
\cite{LezamaGallego}. In this paper we give conditions to guarantee that skew PBW extensions over
fields are Koszul or homogeneous Koszul. We also show that a constant skew PBW extension of a
field is a PBW deformation of its homogeneous version.

\bigskip

\noindent \textit{Key words and phrases.} Skew PBW extensions, pre-Koszul algebras, Koszul algebras, PBW algebras, PBW deformations.

\bigskip

\noindent 2010 \textit{Mathematics Subject Classification.}
 16S37,  16W50,  16W70, 16S36, 16S32.
\end{abstract}

\section{Introduction}

Koszul and homogeneous Koszul algebras were introduced by Priddy in \cite{Priddy1970}; despite of
that these type of algebras have not been enough studied, they have important applications in
algebraic geo\-me\-try, Lie theory, quantum groups, algebraic topology and combinatorics. The
structure and history of Koszul homogeneous algebras were  detailed in \cite{Polishchuk}. There
exist nu\-me\-rous equivalent definitions of homogeneous Koszul algebras (see for example
\cite{Backelin}); in addition, Koszul algebras have been defined in a more general way by some
authors and they are commonly called \textquotedblleft Ge\-ne\-ra\-li\-zed Koszul
algebras\textquotedblright\ (see for example \cite{BeilinsonGinzburgSoerge1996},
\cite{Cassidy2008}, \cite{Li2012}, \cite{Woodcock1998}). In this paper  we work with the definition
given by Priddy, which is commonly called \textquotedblleft Koszul algebras in the classical
sense\textquotedblright.

Skew PBW extensions or $\sigma$-PBW extensions were defined in \cite{LezamaGallego}. Several
properties of these extensions have been recently studied (see for example \cite{lezamaore},
\cite{ArtamonovDerivations}, \cite{Gallego4}, \cite{Jimenez2}, \cite{LezamaAcostaReyes2015},
\cite{LezamaReyes},  \cite{Reyes2013}, \cite{Reyes2014}, \cite{Reyes2014UIS}, \cite{Reyes2015},
\cite{SuarezLezamaReyes2015}, \cite{Venegas2015}). There exist some relations between Koszul and
homogeneous Koszul  algebras with the skew PBW extensions. Our   interest in this article is to
study the Koszul property for the skew PBW extensions over fields. For this purpose we  classify
the skew PBW extensions in five sub-classes: constant, bijective, pre-commutative,
quasi-commutative and semi-commutative, and  we show that a skew PBW extension $A$ of a field  is
Koszul (homogeneous Koszul) when $A$ is pre-commutative and constant (semi-commutative). Finally,
following the ideas presented in \cite{Braverman}, we show that a constant skew PBW extension of a
field is a PBW deformation of  its homogeneous version.

\section{Skew PBW extensions}\label{section2}
In this section we recall some elementary properties of skew PBW extensions; in addition, we will
introduce some sub-classes of them: constant, pre-commutative and semi-commutative. Examples of
these sub-classes are presented.

\subsection{Definitions and properties}

\begin{definition}[\cite{LezamaGallego}, Definition 1]\label{def.skewpbwextensions}
Let $R$ and $A$ be rings. We say that $A$ is a \textit{skew PBW
extension of} $R$ (also called a $\sigma$-PBW extension
of $R$) if the following conditions hold:
\begin{enumerate}
\item[\rm (i)]$R\subseteq A$;
\item[\rm (ii)]there exist finitely many elements $x_1,\dots ,x_n\in A$ such $A$ is a left $R$-free module with basis
\begin{center}
${\rm Mon}(A):= \{x^{\alpha}=x_1^{\alpha_1}\cdots x_n^{\alpha_n}\mid \alpha=(\alpha_1,\dots
,\alpha_n)\in \mathbb{N}^n\}$, with $\mathbb{N}:=\{0,1,2,\dots\}$.
\end{center}
The set $Mon(A)$ is called the set of standard monomials of $A$.

\item[\rm (iii)]For each $1\leq i\leq n$ and any $r\in R\ \backslash\ \{0\}$, there exists an element $c_{i,r}\in R\ \backslash\ \{0\}$ such that
\begin{equation}\label{sigmadefinicion1}
x_ir-c_{i,r}x_i\in R.
\end{equation}
\item[\rm (iv)]For any elements $1\leq i,j\leq n$ there exists $c_{i,j}\in R\ \backslash\ \{0\}$ such that
\begin{equation}\label{sigmadefinicion2}
x_jx_i-c_{i,j}x_ix_j\in R+Rx_1+\cdots +Rx_n.
\end{equation}
Under these conditions we will write $A:=\sigma(R)\langle
x_1,\dots,x_n\rangle$.
\end{enumerate}
\end{definition}

The notation $\sigma(R)\langle x_1,\dots,x_n\rangle$ and the name of the skew PBW extensions are due to the following proposition.

\begin{proposition}[\cite{LezamaGallego}, Proposition 3]\label{sigmadefinition}
Let $A$ be a skew PBW extension of $R$. For each $1\leq i\leq
n$, there exists an injective endomorphism $\sigma_i:R\rightarrow
R$ and a $\sigma_i$-derivation $\delta_i:R\rightarrow R$ such that
\begin{equation}
x_ir=\sigma_i(r)x_i+\delta_i(r),\ \ \ \  \ r \in R.
\end{equation}
\end{proposition}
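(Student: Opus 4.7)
The plan is to extract the maps $\sigma_i$ and $\delta_i$ directly from condition (iii) of Definition \ref{def.skewpbwextensions} and then verify all the required algebraic properties by comparing expansions of $x_i r$ in the free basis $\mathrm{Mon}(A)$. For each $i$ and each $r\in R\setminus\{0\}$ I would set $\sigma_i(r):=c_{i,r}$, and then set $\delta_i(r):=x_i r-\sigma_i(r)x_i$, which lies in $R$ by (iii). On the zero element I put $\sigma_i(0):=0$ and $\delta_i(0):=0$, so the identity $x_i r=\sigma_i(r)x_i+\delta_i(r)$ holds for every $r\in R$.

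The cornerstone of the whole argument is uniqueness. Since $\mathrm{Mon}(A)$ is a basis of $A$ as a left $R$-module and $1=x^{0}$ and $x_i=x^{e_i}$ are two distinct basis elements, any equality of the form $ax_i+b=a'x_i+b'$ with $a,a'\in R$ and $b,b'\in R$ forces $a=a'$ and $b=b'$. Hence the pair $(\sigma_i(r),\delta_i(r))$ is the \emph{unique} pair $(a,b)\in R\times R$ such that $x_i r=ax_i+b$; this is what makes both assignments well-defined functions $R\to R$.

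With uniqueness in hand, additivity and the remaining structural identities fall out by expanding $x_i(r+s)$ and $x_i(rs)$ two different ways. On the one hand $x_i(r+s)=\sigma_i(r+s)x_i+\delta_i(r+s)$, and on the other $x_i(r+s)=x_ir+x_is=(\sigma_i(r)+\sigma_i(s))x_i+(\delta_i(r)+\delta_i(s))$, so uniqueness yields $\sigma_i(r+s)=\sigma_i(r)+\sigma_i(s)$ and the corresponding relation for $\delta_i$. For the product, computing $x_i(rs)=(x_ir)s=(\sigma_i(r)x_i+\delta_i(r))s=\sigma_i(r)(x_is)+\delta_i(r)s=\sigma_i(r)\sigma_i(s)x_i+\bigl(\sigma_i(r)\delta_i(s)+\delta_i(r)s\bigr)$ and comparing with $\sigma_i(rs)x_i+\delta_i(rs)$ gives both $\sigma_i(rs)=\sigma_i(r)\sigma_i(s)$ and the $\sigma_i$-derivation identity $\delta_i(rs)=\sigma_i(r)\delta_i(s)+\delta_i(r)s$. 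The normalization $\sigma_i(1)=1$, $\delta_i(1)=0$ comes from the trivial identity $x_i\cdot 1=x_i$ together with uniqueness, so $\sigma_i$ is indeed a ring endomorphism.

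Finally, injectivity of $\sigma_i$ is immediate from the nonvanishing clause built into Definition \ref{def.skewpbwextensions}(iii): if $r\ne 0$ then $c_{i,r}\in R\setminus\{0\}$, so $\sigma_i(r)\ne 0$. The only mildly delicate point in the whole argument is remembering to treat the $r=0$ case separately when invoking (iii) (which quantifies over nonzero $r$), but this is handled cleanly by the definition $\sigma_i(0)=0$, and no obstacle of substance arises; the proof is essentially a uniqueness-of-coordinates argument in the free module $A$.
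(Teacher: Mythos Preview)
Your argument is correct and is precisely the standard proof: define $\sigma_i(r)$ and $\delta_i(r)$ from condition (iii), use the freeness of $A$ over $R$ on the basis $\mathrm{Mon}(A)$ to get uniqueness of the coefficients of $1$ and $x_i$, and then read off additivity, multiplicativity, the $\sigma_i$-derivation rule, and injectivity. Note that the present paper does not actually supply a proof of this proposition---it is quoted from \cite{LezamaGallego}, Proposition~3---so there is no in-paper argument to compare against; your proof matches the one given in that reference.
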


\begin{definition}\label{sigmapbwderivationtype}
Let $A$ be a skew PBW extension of $R$, $\Sigma:=\{\sigma_1,\dotsc, \sigma_n\}$ and $\Delta:=\{\delta_1,\dotsc, \delta_n\}$, where $\sigma_i$ and $\delta_i$ ($1\leq i\leq n$) are as in the Proposition \ref{sigmadefinition}.
\begin{enumerate}
\item[\rm (a)]  $A$ is called {\em pre-commutative} if the condition  {\rm(}iv{\rm)} in Definition
\ref{def.skewpbwextensions} is replaced by:\\
 For any $1\leq i,j\leq n$ there exists $c_{i,j}\in R\ \backslash\ \{0\}$ such that
\begin{equation}\label{relat.pre-comm}
x_jx_i-c_{i,j}x_ix_j\in Rx_1+\cdots +Rx_n.
\end{equation}

\item[\rm (b)]\label{def.quasicom} $A$ is called \textit{quasi-commutative} if the conditions
{\rm(}iii{\rm)} and {\rm(}iv{\rm)} in Definition
\ref{def.skewpbwextensions} are replaced by \begin{enumerate}
\item[\rm (iii')] for each $1\leq i\leq n$ and all $r\in R\ \backslash\ \{0\}$ there exists $c_{i,r}\in R\ \backslash\ \{0\}$ such that
\begin{equation}
x_ir=c_{i,r}x_i;
\end{equation}
\item[\rm (iv')]for any $1\leq i,j\leq n$ there exists $c_{i,j}\in R\ \backslash\ \{0\}$ such that
\begin{equation}
x_jx_i=c_{i,j}x_ix_j.
\end{equation}
\end{enumerate}
\item[\rm (c)]  $A$ is called \textit{bijective} if $\sigma_i$ is bijective for each $\sigma_i\in \Sigma$, and $c_{i,j}$ is invertible for any $1\leq
i<j\leq n$.
\item[\rm (d)]   Any element $r$ of $R$ such that $\sigma_i(r)=r$ and $\delta_i(r)=0$ for all $1\leq i\leq n$ will be called a {\em constant}. $A$ is called \emph{constant} if every element of $R$ is constant.
\item[\rm (e)]  $A$ is called {\em semi-commutative} if $A$ is quasi-commutative and constant.
\end{enumerate}
\end{definition}

Recall that  a \textit{filtered ring} is a ring $B$ with a family $F(B)=\{F_n(B)\mid n \in
\mathbb{Z}\}$ of subgroups of the additive group of $B$ where we have the ascending chain $\dotsb
\subset F_{n-1}(B)\subset F_n(B)\subset \dotsb$ such that $1\in F_0(B)$ and $F_n(B)F_m(B)\subseteq
F_{n+m}(B)$ for all $n,m\in \mathbb{Z}$. From a filtered ring $B$  it is possible to construct its
associated graded ring $Gr(B)$ taking  $Gr(B)_n:=F_n(B)/F_{n-1}(B)$. The following proposition
establishes that one can construct a quasi-commutative skew PBW extension from a given skew PBW
extension of a ring $R$.

\begin{proposition}[\cite{LezamaReyes},  Proposition 2.1]\label{remarkAsigma}  Let $A$ be a skew PBW extension of $R$. Then, there exists a quasicommutative
skew PBW extension $A^{\sigma}$ of $R$ in $n$ variables $z_1,\dotsc, z_n$ defined by the relations $z_ir=c_{i,r}z_i$, $z_jz_i=c_{i,j}z_iz_j$, for $1\le i\le n$, where $c_{i,r}, c_{i,j}$ are the same constants that define $A$. Moreover, if $A$ is bijective then $A^{\sigma}$ is also bijective.
\end{proposition}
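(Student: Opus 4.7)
The plan is to realize $A^\sigma$ as the associated graded ring $Gr(A)$ with respect to the natural total-degree filtration on $A$; the paragraph on filtered rings and their associated graded rings placed just before the statement signals this as the intended route.

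First I would equip $A$ with the left $R$-module filtration $F_k(A):=\bigoplus_{|\alpha|\le k} R\,x^{\alpha}$ for $k\ge 0$ (and $F_k(A):=0$ for $k<0$), where $|\alpha|:=\alpha_1+\cdots+\alpha_n$. The first technical step, which I expect to be the main obstacle, is to verify that this is a multiplicative filtration, that is, $F_k(A)F_\ell(A)\subseteq F_{k+\ell}(A)$. This reduces to checking that the two defining families of relations are degree-compatible: $x_ir=\sigma_i(r)x_i+\delta_i(r)$ from Proposition~\ref{sigmadefinition} has total degree $\le 1$ on both sides, and $x_jx_i-c_{i,j}x_ix_j\in R+Rx_1+\cdots+Rx_n$ shows that $x_jx_i$ has total degree $\le 2$. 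A straightforward induction on $|\alpha|+|\beta|$, reducing an arbitrary product $x^\alpha x^\beta$ to the standard basis via repeated application of these two rules, then yields the required degree bound.

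Next, I set $A^\sigma:=Gr(A)$ and $z_i:=\overline{x_i}\in Gr(A)_1=F_1(A)/F_0(A)$. Since $F_0(A)=R$ and each quotient $F_k(A)/F_{k-1}(A)$ inherits the left $R$-basis $\{\overline{x^\alpha}:|\alpha|=k\}$ from the standard basis of $A$, the set $\{z^\alpha:\alpha\in\mathbb{N}^n\}$ is a left $R$-basis of $A^\sigma$; this takes care of conditions (i) and (ii) of Definition~\ref{def.skewpbwextensions}. Passing the defining relations to the graded pieces, the term $\delta_i(r)\in R=F_0(A)$ dies in $Gr(A)_1$ and yields $z_ir=c_{i,r}z_i$, while the element of $R+Rx_1+\cdots+Rx_n\subseteq F_1(A)$ dies in $Gr(A)_2$ and yields $z_jz_i=c_{i,j}z_iz_j$, both with exactly the constants that already define $A$. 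This is the quasi-commutative version of conditions (iii) and (iv).

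Finally, for the bijectivity assertion, the endomorphism of $R$ attached to $z_i$ by Proposition~\ref{sigmadefinition} applied to $A^\sigma$ is the map $r\mapsto c_{i,r}=\sigma_i(r)$, namely $\sigma_i$ itself, and the scalars $c_{i,j}$ are unchanged; hence if every $\sigma_i$ is bijective and every $c_{i,j}$ with $i<j$ is invertible in $R$, the same properties transfer to $A^\sigma$.
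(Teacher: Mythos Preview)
The paper does not supply its own proof of this proposition; it is quoted from \cite{LezamaReyes}. Your argument is correct and in fact establishes both Proposition~\ref{remarkAsigma} and Theorem~\ref{teo.Gr(A)} in one stroke, by constructing $A^\sigma$ directly as the associated graded ring $Gr(A)$ and reading off the quasi-commutative relations there. In the cited reference, and in the logical ordering the present paper adopts, the argument runs the other way: first $A^\sigma$ is built abstractly as a quasi-commutative skew PBW extension (one must check that the relations $z_ir=c_{i,r}z_i$ and $z_jz_i=c_{i,j}z_iz_j$ really do define a ring having the standard monomials as a free left $R$-basis), and only afterwards, in Theorem~\ref{teo.Gr(A)}, is the isomorphism $Gr(A)\cong A^\sigma$ established. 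Your route is more economical, since $Gr(A)$ is automatically a ring and the basis check is immediate from the filtration; the trade-off is that it merges two results the paper chooses to keep separate.
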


The next proposition  computes the graduation of a skew PBW extension.

\begin{theorem}[\cite{LezamaReyes}, Theorem 2.2]\label{teo.Gr(A)}
Let $A$ be an arbitrary skew PBW extension of $R$. Then, $A$ is
a filtered ring with increasing filtration given by
\begin{equation}\label{eq1.3.1a}
F_m(A):=\begin{cases} R & {\rm if}\ \ m=0\\ \{f\in A\mid {\rm deg}(f)\le m\} & {\rm if}\ \ m\ge 1
\end{cases}
\end{equation}
and the corresponding graded ring $Gr(A)$ is isomorphic to $A^{\sigma}$.
\end{theorem}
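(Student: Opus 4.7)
The plan is to first verify that the subgroups $F_m(A)$ form a ring filtration, then construct an explicit graded ring isomorphism $\phi: A^{\sigma} \to Gr(A)$ that sends $z_i$ to $x_i + F_0(A)$ and $r \in R$ to itself.

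The conditions $F_{m-1}(A) \subseteq F_m(A)$, $1 \in F_0(A) = R$, and $\bigcup_m F_m(A) = A$ are immediate from the definition of degree and the fact that ${\rm Mon}(A)$ is a left $R$-basis of $A$. The nontrivial axiom is the multiplicative one, $F_n(A) F_m(A) \subseteq F_{n+m}(A)$. I would prove this by induction on $n+m$, reducing to the case of products of standard monomials. The base cases come from the defining relations: relation (iii) together with Proposition \ref{sigmadefinition} gives $x_i r = \sigma_i(r) x_i + \delta_i(r) \in F_1(A)$; and relation (iv) gives $x_j x_i = c_{i,j} x_i x_j + s$ with $s \in R + Rx_1 + \cdots + Rx_n \subseteq F_1(A)$, so $x_j x_i \in F_2(A)$. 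The inductive step repeatedly pushes $x_j$'s past $x_i$'s (with $i < j$) and past elements of $R$, at each step introducing only correction terms of strictly smaller total degree, which are handled by the inductive hypothesis.

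For the graded isomorphism, I use that $A^{\sigma}$ is a free left $R$-module with basis $\{z^{\alpha} : \alpha \in \mathbb{N}^n\}$ by Proposition \ref{remarkAsigma}, and define the $R$-module map $\phi$ by $z^{\alpha} \mapsto \overline{x^{\alpha}} := x^{\alpha} + F_{|\alpha|-1}(A) \in Gr(A)_{|\alpha|}$, with the convention $F_{-1}(A) = 0$. To see that $\phi$ is a ring homomorphism, I check that it respects the two defining relations of $A^{\sigma}$: the relation $z_i r = c_{i,r} z_i$ holds in $Gr(A)_1$ because $x_i r - c_{i,r} x_i = \delta_i(r) \in R = F_0(A)$, and the relation $z_j z_i = c_{i,j} z_i z_j$ holds in $Gr(A)_2$ because $x_j x_i - c_{i,j} x_i x_j \in F_1(A)$ by (iv). Bijectivity is then checked degree by degree: the classes $\{\overline{x^{\alpha}} : |\alpha| = m\}$ form a left $R$-basis of $Gr(A)_m = F_m(A)/F_{m-1}(A)$, since the standard monomials of degree $\leq m$ are $R$-linearly independent in $A$ and those of degree $< m$ precisely span $F_{m-1}(A)$; hence $\phi$ sends an $R$-basis of the degree-$m$ part of $A^{\sigma}$ bijectively onto an $R$-basis of $Gr(A)_m$.

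The main obstacle is the careful bookkeeping in the inductive proof that $F_n(A) F_m(A) \subseteq F_{n+m}(A)$: each application of relation (iv) produces correction terms living in $R + Rx_1 + \cdots + Rx_n$, and one must check that after iterating these rewrites to normalize an arbitrary product of monomials into the basis ${\rm Mon}(A)$, no term of degree strictly greater than $n+m$ is ever produced. Once the multiplicativity of the filtration is in hand, the identification of $Gr(A)$ with $A^{\sigma}$ is essentially formal from the defining relations together with the basis statement in Proposition \ref{remarkAsigma}.
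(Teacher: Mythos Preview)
The paper does not give its own proof of this theorem; it is quoted from \cite{LezamaReyes} (Theorem~2.2 there) without argument, so there is no in-paper proof to compare against. Your outline is the standard route to this result and is essentially correct.

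One minor expository point: you first define $\phi$ as a left $R$-module map on the basis $\{z^{\alpha}\}$ and then say you will ``check that it respects the two defining relations of $A^{\sigma}$.'' Checking relations is how one produces a ring homomorphism out of a \emph{presentation}, not how one verifies that a previously defined $R$-linear map is multiplicative. The clean way to organize the argument is to use the universal property of $A^{\sigma}$ directly: send the generators $r\mapsto r$ and $z_i\mapsto \overline{x_i}\in Gr(A)_1$, verify that the relations $z_i r=c_{i,r}z_i$ and $z_jz_i=c_{i,j}z_iz_j$ hold in $Gr(A)$ (precisely the computations you give), obtain a ring homomorphism $\phi$, and only then observe that $\phi(z^{\alpha})=\overline{x^{\alpha}}$ and conclude bijectivity degree by degree from the basis comparison. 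This is clearly what you intend; only the order of the steps needs adjusting.
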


\subsection{Examples and classification}\label{ex and classif}

\begin{examples}\label{general exam}
In \cite{LezamaGallego} and \cite{LezamaReyes}  was presented a list of
examples of quasi-commutative or bijective skew PBW extensions. We also classify these examples according to
Definition  \ref{sigmapbwderivationtype}. Through this paper,
$\mathbb{K}$ will denote a field and $K$ a commutative ring.
\begin{enumerate}
\item Classical polynomial ring; Ore extensions  of bijective type and Weyl algebras;
Universal enveloping algebra of a Lie algebra; Tensor product; crossed product; Algebra of
$q$-differential operators; Algebra of shift operators;  Mixed algebras; Algebra of discrete linear
systems; Linear partial differential operators; Linear partial shift operators; Algebra of linear
partial difference operators; Algebra of linear partial q-dilation operators; Algebra of linear
partial q-differential operators;  Diffusion algebra 1 (\cite{Reyes2014UIS}); Diffusion algebra 2
(\cite{LezamaReyes}); Additive analogue of the Weyl algebra; Multiplicative analogue of the Weyl
algebras; Quantum algebras; Dispin algebras; Woronowicz algebras; Complex algebras; Algebra
$\textbf{U}$; Manin algebras;  Algebra of quantum matrices; $q$-Heisenberg algebras;  Quantum
enveloping algebras of $\mathfrak{sl}(2,\mathbb{K})$; The algebra of differential operators  on a
quantum space; Witten's deformation of   $\cU(\mathfrak{sl}(2,\mathbb{K}))$; Quantum Weyl algebra
of Maltsiniotis; Quantum Weyl algebras; Multiparameter quantized Weyl algebras; Quantum symplectic
space and Quadratic algebras in 3 variables.
\item\label{jordan plane}Jordan plane. The Jordan plane $A$ is the free $\mathbb{K}$-algebra generated by $x,y$ with relation $yx = xy + x^2$, so  $A=\mathbb{K}\langle x, y\rangle/ \langle yx-xy-x^2\rangle\cong \sigma(\mathbb{K}[x])\langle y\rangle$.
\item\label{Sklyanin} Particular  Sklyanin algebra. The  Sklyanin algebra (Example 1.14, \cite{Rogalski}) is the $\mathbb{K}$-algebra
$S = \mathbb{K}\langle x, y, z\rangle/\langle ayx + bxy + cz^2, axz + bzx + cy^2, azy + byz + cx^2\rangle$, where $a, b, c\in \mathbb{K}$. If $c\neq 0$ then $S$ is not a skew PBW extension. If $c=0$ and $a,b\neq 0$ then in $S$: $yx = -\frac{b}{a}xy$; $zx =  -\frac{a}{b}xz$ and $zy =  -\frac{b}{a}yz$, therefore $S\cong\sigma(\mathbb{K})\langle x,y,z\rangle$ is a skew PBW extension of $\mathbb{K}$, and we call this algebra a \emph{particular Sklyanin algebra}.
\item Multi-parameter quantum affine $n$-spaces.  Let $n\geq 1$ and $\textbf{q}$ be a  matrix $(q_{ij})_{n\times n}$ with entries in a field $\mathbb{K}$  where
 $q_{ii} = 1$ y $q_{ij}q_{ji} = 1$ for all $1\leq i, j \leq n$. Then multi-parameter quantum affine $n$-space $\mathcal{O}_{\textbf{q}}(\mathbb{K}^n)$ is defined  to be $\mathbb{K}-$algebra generated by
 $x_1,\dots, x_n$ with the relations $x_jx_i = q_{ij}x_ix_j$
for all $1\leq i, j \leq n$.
\item \label{ex.homog envoluniversal} Homogenized enveloping algebra (\cite{Smith2}, Chapter 12).
Let $\mathcal{G}$ a finite dimensional Lie algebra over $\mathbb{K}$ with basis $\{x_1, \dots,
x_n\}$ and $\mathcal{U}(\mathcal{G})$ its enveloping algebra. The \emph{homogenized enveloping
algebra} of $\mathcal{G}$ is $\mathcal{A}(\mathcal{G}):= T(\mathcal{G}\oplus \mathbb{K}z)/\langle
R\rangle$, where $T(\mathcal{G}\oplus \mathbb{K}z)$ is the tensor algebra, $z$ is a new variable,
and $R$ is spanned by $\{z\otimes x-x\otimes z\mid x\in \mathcal{G}\}\cup \{x\otimes y-y\otimes
x-[x,y]\otimes z\mid x,y\in \mathcal{G}\}$. From the PBW Theorem for $\mathcal{G}\otimes
\mathbb{K}(z)$, considered as a Lie algebra over $\mathbb{K}(z)$, we get that
$\mathcal{A}(\mathcal{G})$ is a skew PBW extension of $\mathbb{K}[z]$.
\end{enumerate}

\begin{classification}\label{classif}

We classify the above examples of skew PBW extensions as constant (C), bijective (B),
pre-commutative (P), quasi-commutative (QC) and semi-commutative (SC); the classification is
presented in the next table, where the symbols $\star$ and $\checkmark$ denote negation and
affirmation, respectively.

\begin{center}

\scriptsize{
\begin{tabular}{|l|c|c|c|c|c|}\hline
\multicolumn{6}{|c|}{\textbf{Classification of Examples \ref{general exam}  }}\\ \hline
 \textbf{Skew PBW extension} &   \textbf{C} & \textbf{B} & \textbf{P} & \textbf{QC} &  \textbf{SC} \\
\hline \hline  Classical polynomial ring   & $\checkmark$ & $\checkmark$ & $\checkmark$  & $\checkmark$ &  $\checkmark$\\ \hline
 Ore extensions  of bijective type  & $\star$ & $\checkmark$ &  $\checkmark$ & $\star$ & $\star$\\ \hline
 Weyl algebra  &  $\star$ & $\checkmark$  &  $\checkmark$ & $\star$ & $\star$ \\ \hline
 Jordan plane &  $\star$ & $\checkmark$ & $\checkmark$ &   $\star$ &  $\star$ \\ \hline
 Particular  Sklyanin algebra &  $\checkmark$ & $\checkmark$ & $\checkmark$ &  $\checkmark$ &  $\checkmark$ \\ \hline
 Universal enveloping algebra of a Lie algebra  & $\checkmark$ & $\checkmark$ &   $\checkmark$ & $\star$ & $\star$\\ \hline
 Homogenized enveloping algebra $\mathcal{A}(\mathcal{G})$ & $\checkmark$ & $\checkmark$ &   $\checkmark$ & $\star$ & $\star$\\ \hline
Tensor product  & $\checkmark$ &$\checkmark$ & $\checkmark$& $\star$ &$\star$ \\ \hline
 Crossed product  & $\star$ & $\checkmark$ & $\star$ &$\star$ & $\star$\\ \hline
 Algebra of $q$-differential operators   & $\star$ & $\checkmark$ & $\checkmark$ &$\star$ & $\star$\\ \hline
 Algebra of shift operators   & $\star$ & $\checkmark$ &$\checkmark$ &$\checkmark$ & $\star$ \\ \hline
 Mixed algebra  & $\star$& $\checkmark$ &$\star$ &$\star$ &$\star$ \\ \hline
 Algebra of discrete linear systems   & $\star$&$\checkmark$ &$\checkmark$ &$\checkmark$ & $\star$\\ \hline
 Linear partial differential operators   & $\star$& $\checkmark$& $\checkmark$&$\star$ &$\star$ \\ \hline
 Linear partial shift operators   &$\star$ &$\checkmark$ &$\checkmark$ & $\checkmark$&$\star$ \\ \hline
 Algebra of linear partial difference operators   & $\star$& $\checkmark$& $\checkmark$ &$\star$ & $\star$\\ \hline
 Algebra of linear partial q-dilation operators   & $\star$ &$\checkmark$ & $\checkmark$&$\checkmark$ &$\checkmark$ \\ \hline
 Algebra of linear partial q-differential operators   &$\star$ & $\checkmark$ & $\checkmark$ &$\star$ &$\star$ \\ \hline
 Diffusion algebra 1  & $\checkmark$ & $\checkmark$ & $\checkmark$ & $\star$ & $\star$\\ \hline
 Diffusion algebra 2  & $\checkmark$ & $\checkmark$ & $\checkmark$ & $\star$ & $\star$\\ \hline
 Additive analogue of the Weyl algebra   &$\checkmark$ &$\checkmark$ & $\star$& $\star$& $\star$\\ \hline
 Multiplicative analogue of the Weyl algebra  &$\checkmark$ &$\checkmark$ &$\checkmark$ & $\checkmark$& $\checkmark$\\ \hline
 Quantum algebra   &$\checkmark$ &$\checkmark$ &$\checkmark$ & $\star$& $\star$\\ \hline
 Dispin algebra  & $\checkmark$ & $\checkmark$ & $\checkmark$ & $\star$ & $\star$\\ \hline
 Woronowicz algebra  &$\checkmark$ &$\checkmark$ &$\checkmark$ & $\star$& $\star$ \\ \hline
 Complex algebra    & $\star$  &$\checkmark$ &$\star$  &$\star$  & $\star$ \\ \hline
 Algebra $\textbf{U}$   & $\star$  &$\checkmark$ &$\star$  &$\star$  & $\star$ \\ \hline
Manin algebra   & $\star$  &$\checkmark$ & $\checkmark$  &$\star$  & $\star$ \\ \hline
$q$-Heisenberg algebra  & $\checkmark$  &$\checkmark$ & $\checkmark$  &$\star$  & $\star$  \\ \hline
 Quantum enveloping algebra of $\mathfrak{sl}(2,\mathbb{K})$ & $\star$  &$\checkmark$ &$\star$  &$\star$  & $\star$ \\ \hline
 Hayashi's algebra  & $\star$  &$\checkmark$ &$\star$  &$\star$  & $\star$ \\ \hline
 Multi-parameter quantum affine $n$-space  &$\checkmark$  &$\checkmark$  &$\checkmark$  & $\checkmark$ & $\checkmark$ \\ \hline
 The algebra of differential operators on a quantum space $S_q$ & $\star$ & $\checkmark$  & $\star$ & $\star$ &$\star$ \\ \hline
Witten's deformation of   $\cU(\mathfrak{sl}(2,\mathbb{K}))$& $\star$ & $\checkmark$  & $\star$ & $\star$ &$\star$  \\ \hline
 Quantum Weyl algebra of Maltsiniotis  & $\star$ & $\checkmark$  & $\star$ & $\star$ &$\star$ \\ \hline
 Quantum Weyl algebra  & $\star$ & $\checkmark$  & $\star$ & $\star$ &$\star$  \\ \hline
 Multiparameter quantized Weyl algebra & $\star$ & $\checkmark$  & $\star$ & $\star$ &$\star$  \\ \hline
 Quantum symplectic space & $\star$ & $\checkmark$  & $\star$ & $\star$ &$\star$ \\ \hline
 Quadratic algebras in 3 variable & $\star$ & $\checkmark$  & $\star$ & $\star$ &$\star$  \\ \hline

\end{tabular}}
\end{center}
\end{classification}
\end{examples}

\begin{example}\label{sridharan} \emph{Sridharan enveloping algebra of 3-dimensional Lie algebra $\mathcal{G}$}.
Let $\mathcal{G}$ be a finite dimensional Lie algebra, and let $f \in Z^2(\mathcal{G},\mathbb{K})$
be an arbitrary \textit{$2-$cocycle}, that is, $f: \mathcal{G}\times\mathcal{G}\to
\mathbb{K} \text{ such that } f (x, x) =0$  and  $$f(x, [y, z])+ f ( z, [x, y])  + f ( y, [z, x] )=
0$$ for all $x, y, z\in
\mathcal{G}$. The Sridharan enveloping algebra of $\mathcal{G}$ is defined to be the
associative algebra $\mathcal{U}_f(\mathcal{G}) = T(\mathcal{G})/I,$ where  $T(\mathcal{G})$ is the tensor algebra of $\mathcal{G}$ and  $I$ is the two-sided
ideal of $T(\mathcal{G})$ generated by the elements
$$(x\otimes y) - (y \otimes x) - [x, y] - f(x, y), \text{ for all $x, y\in
\mathcal{G}$}.$$
Note that if $f=0$ then $\mathcal{U}_f(\mathcal{G})=\mathcal{U}_0(\mathcal{G})=\mathcal{U}(\mathcal{G})$. For $x\in \mathcal{G}$, we still denote by $x$ its image in
$\mathcal{U}_f(\mathcal{G})$. $\mathcal{U}_f(\mathcal{G})$
is a filtered algebra with the associated graded algebra
$gr(\mathcal{U}_f(\mathcal{G}))$ being a polynomial algebra. Let $\mathbb{K}$ be an algebraically closed field of characteristic zero.  If $\mathcal{G}$ is a Lie $\mathbb{K}$-algebra of dimension three, then the Sridharan  enveloping algebra $\mathcal{U}_f(\mathcal{G})$ for $f \in Z^2(\mathcal{G},\mathbb{K})$ is isomorphic to one of  ten associative $\mathbb{K}$-algebras (see \cite{Nuss1}, Theorem 1.3), which is defined by three generators $x,y,z$ and the commutation relations as the following table shows. Therefore, the Sridharan  enveloping algebra $\mathcal{U}_f(\mathcal{G})$ is a skew PBW extension of $\mathbb{K}$, i.e. $\mathcal{U}_f(\mathcal{G})\cong\sigma(\mathbb{K})\langle x,y,z\rangle$, and it is classified as follows:

\begin{center}
\scriptsize{
\begin{tabular}{|c|c|c|c|c|c|c|c|c|}\hline
\multicolumn{9}{|c|}{\textbf{ Sridharan enveloping algebra of 3-dimensional Lie algebra $\mathcal{G}$}}\\ \hline
\textbf{Type} & $[x,y]$ & $[y,z]$ & $[z,x]$ & \textbf{C} & \textbf{B} & \textbf{P} & \textbf{QC} &  \textbf{SC}\\ \hline\hline

 1 & $0$ & $0$ & $0$ & $\checkmark$  & $\checkmark$  & $\checkmark$  & $\checkmark$  & $\checkmark$   \\ \hline
 2 & $0$ & $x$ & $0$ & $\checkmark$ & $\checkmark$ & $\checkmark$  & $\star$  & $\star$ \\ \hline
 3& $x$ & $0$ & $0$ & $\checkmark$ & $\checkmark$ & $\checkmark$  & $\star$  & $\star$ \\ \hline
4 & $0$ & $\alpha y$ & $-x$ & $\checkmark$ & $\checkmark$ & $\checkmark$  & $\star$  & $\star$ \\ \hline
5 & $0$ & $y$ & $-(x+y)$ & $\checkmark$ & $\checkmark$ & $\checkmark$  & $\star$  & $\star$ \\ \hline
 6 & $z$ & $-2y$ & $-2x$ & $\checkmark$ & $\checkmark$ & $\checkmark$  & $\star$  & $\star$ \\ \hline
 7& $1$ & $0$ & $0$ & $\checkmark$ & $\checkmark$ & $\star$   & $\star$  & $\star$ \\ \hline
  8& $1$ & $x$ & $0$ & $\checkmark$ & $\checkmark$ & $\star$   & $\star$  & $\star$ \\ \hline
  9& $x$ & $1$ & $0$ &  $\checkmark$ & $\checkmark$ & $\star$   & $\star$  & $\star$ \\ \hline
   10& $1$ & $y$ & $x$ &  $\checkmark$ & $\checkmark$ & $\star$   & $\star$  & $\star$ \\ \hline
\end{tabular}}
\end{center}
where $\alpha\in \mathbb{K} \backslash\{0\}$.

\end{example}

\section{Koszulity}

Some authors have defined Koszul algebras in a more general sense than \cite{Priddy1970} (see for
example \cite{BeilinsonGinzburgSoerge1996}, \cite{Li2012}, \cite{Woodcock1998}).  Our focus is to
study the Koszul property for skew PBW extensions taking into account the definition given  in
\cite{Priddy1970}. In this section we give sufficient conditions to guarantee that skew PBW
extensions are Koszul or homogeneous Koszul. For this purpose, we show the relationship between
$\mathbb{K}$-algebras that are skew PBW extensions and certain classes of algebras defined in
\cite{Priddy1970} containing the Koszul and homogeneous Koszul algebras. Let $L:=\mathbb{K}\langle x_1,\dots, x_n\rangle$  the free associative algebra (tensor algebra) in $n$ generators $x_1,\dots, x_n$. Note that $L$ is positively graded with graduation given by $L:=\bigoplus_{j\geq 0}L_j$, where
$L_0= \mathbb{K}$ and $L_j$ spanned by all words of length $j$ in the alphabet $\{x_1, \dots,
x_n\}$, for $j>0$.

\subsection{Pre-Koszul algebras}

We present a definition of pre-Koszul and homogeneous pre-Koszul algebras, analogous to the definition given by Priddy in \cite{Priddy1970}.

\begin{definition}\label{def.varios} Let $L=\mathbb{K}\langle x_1,\dots, x_n\rangle$ and let $B:=L/I$.
\begin{enumerate}
\item[\rm (i)] $B$ is said to be a \emph{pre-Koszul} algebra if  $I$ is a two sided ideal generated by elements of the form
\begin{equation}\label{rep.prekosz}
\sum_{i=1}^{n}c_ix_i + \sum_{1\leq j,k\leq n}c_{j,k}x_jx_k, \text{ where } c_i \text{ and } c_{j,k} \text{ are in } \mathbb{K},
\end{equation}
\item[\rm (ii)] A pre-Koszul algebra is said to be \emph{pre-Koszul homogeneous} if $c_i=0$, for $1\leq i\leq n$ in  (\ref{rep.prekosz}).
\end{enumerate}
\end{definition}

Presentations of special types of skew PBW extensions are given in the following remark.

\begin{remark}\label{notac.skewpbwext} Let $A= \sigma(\mathbb{K})\langle x_1,\dots, x_n\rangle$ be a skew PBW extension of a field $\mathbb{K}$.
\begin{enumerate}
\item  We note that $A = \mathbb{K}\langle x_1,\dots, x_n\rangle/I,$ where $I$ is the two sided ideal generated by  elements as in  $(iii)$ and $(iv)$
of the Definition \ref{def.skewpbwextensions}, i.e., elements of the form
\begin{equation}\label{rep.skewpbwext}
c_r + x_ir-c_{i,r}x_i,\quad r_0+r_1x_1+ \cdots + r_nx_n+ x_jx_i-c_{i,j}x_ix_j,
\end{equation}
where  $r\neq 0$, $c_r$, $c_{i,r}\neq 0$, $r_0$, $r_1,\dots, r_n$, $c_{i,j}\neq 0$ are elements in
$\mathbb{K}$, with $ 1\leq i,j\leq n$.

\item  If $A$ is pre-commutative, then $A = \mathbb{K}\langle x_1,\dots, x_n\rangle/I$, where $I$ is the two sided ideal generated by  elements  of the form
\begin{equation}\label{rep.pre-commut}
c_r + x_ir-c_{i,r}x_i,\quad r_1x_1+ \cdots + r_nx_n+ x_jx_i-c_{i,j}x_ix_j,
\end{equation}
 where  $r\neq 0$, $c_r$, $c_{i,r}\neq 0$,  $r_1,\dots, r_n$, $c_{i,j}\neq 0$ are elements in $\mathbb{K}$, with $ 1\leq i,j\leq n$.

 \item  If $A$ is constant, then $A = \mathbb{K}\langle x_1,\dots, x_n\rangle/I,$ where $I$ is the two sided ideal generated by  elements  of the form
\begin{equation}\label{rep.const}
r_0 +r_1x_1+ \cdots + r_nx_n+ x_jx_i-c_{i,j}x_ix_j,
\end{equation}
 where   $r_0, r_1,\dots, r_n$, $c_{i,j}\neq 0$ are elements in $\mathbb{K}$, with $ 1\leq i,j\leq n$.

\item  If $A$ is  quasi-commutative then $A = \mathbb{K}\langle x_1,\dots, x_n\rangle/I,$ where $I$ is the two sided ideal generated by  elements as in   $(iii')$ and $(iv')$ of the Definition \ref{def.quasicom}, i.e., elements of the form
\begin{equation}\label{rep.quasi-comm}
x_ir-c_{i,r}x_i,\quad  x_jx_i-c_{i,j}x_ix_j
\end{equation}
where  $r\neq 0$, $c_{i,r}\neq 0$, $c_{i,j}\neq 0$ are elements in $\mathbb{K}$, with $ 1\leq i,j\leq n$.
\item  If $A$ is  semi-commutative then $A = \mathbb{K}\langle x_1,\dots, x_n\rangle/I,$ where $I$ is the two sided ideal generated by  elements of the form
\begin{equation}\label{rep.semi-comm}
x_jx_i-c_{i,j}x_ix_j
\end{equation}
where  $c_{i,j}\neq 0$ are elements in $\mathbb{K}$, with $ 1\leq i,j\leq n$.

\end{enumerate}
\end{remark}

If otherwise is not assumed, in this paper all skew PBW extensions are
$\mathbb{K}$-algebras and extensions of the field $\mathbb{K}$ (i.e., $R=\mathbb{K}$ in Definition
\ref{def.skewpbwextensions}), so $A=\sigma(\mathbb{K})\langle x_1\dots, x_n\rangle$ is necessarily
a constant skew PBW extension.

\begin{proposition}\label{prop.pre-Koszul}

Let $A= \sigma(\mathbb{K})\langle x_1,\dots, x_n\rangle$ be a skew PBW extension. If $A$ is
pre-commutative then $A$ is pre-Koszul.

\end{proposition}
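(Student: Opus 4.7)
The strategy is to read off an explicit set of generators of the defining ideal $I$ of $A = L/I$, where $L = \mathbb{K}\langle x_1,\dots,x_n\rangle$, from the pre-commutative presentation given in Remark \ref{notac.skewpbwext}, and to check that, once the standing convention that $A$ is a constant skew PBW extension of $\mathbb{K}$ is applied, every such generator has the pre-Koszul shape \eqref{rep.prekosz}, i.e., a sum of linear and quadratic monomials with no constant term.

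First I would invoke part (2) of Remark \ref{notac.skewpbwext} to write $I$ as the two-sided ideal of $L$ generated by the linear-type elements $c_r + x_i r - c_{i,r} x_i$ (with $r \in \mathbb{K}\setminus\{0\}$ and $1 \le i \le n$) together with the quadratic-type elements $r_1 x_1 + \cdots + r_n x_n + x_j x_i - c_{i,j} x_i x_j$ (with $1 \le i,j \le n$). Then I would use the convention preceding the proposition: since $A$ is a skew PBW extension over a field, it is constant, so by Definition \ref{sigmapbwderivationtype}(d) and Proposition \ref{sigmadefinition} one has $\sigma_i(r) = r$ and $\delta_i(r) = 0$ for every $r \in \mathbb{K}$, whence $c_{i,r} = \sigma_i(r) = r$ and $c_r = -\delta_i(r) = 0$. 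Because $\mathbb{K}$ lies in the center of $L$, we also have $x_i r = r x_i$ in $L$, so each linear-type generator collapses to $0$ in $L$ and may be discarded from a generating set of $I$.

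What remains are the quadratic-type generators, each of which, thanks to pre-commutativity, carries no constant term and so is manifestly of the form $\sum_k c_k x_k + \sum_{j,k} c_{j,k} x_j x_k$ appearing in \eqref{rep.prekosz}. Hence $I$ is generated by elements of pre-Koszul type, and $A$ is pre-Koszul by Definition \ref{def.varios}(i). The only delicate step is bookkeeping: verifying that the linear-type generators may be eliminated, which rests on the simultaneous use of constancy (to force $c_r = 0$ and $c_{i,r} = r$) and pre-commutativity (to rule out a constant term in the quadratic relations). Both ingredients are available by hypothesis and by the standing convention, so I do not expect any genuine obstacle beyond this collation.
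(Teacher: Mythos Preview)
Your argument is correct and follows essentially the same route as the paper: the paper simply cites the pre-commutative presentation \eqref{rep.pre-commut} together with the constant presentation \eqref{rep.const} to obtain generators of the form \eqref{rep.preKskewpbwext}, which is precisely what you achieve by showing the linear-type generators collapse to $0$ under constancy. The only difference is that you spell out explicitly why those generators vanish (via $c_{i,r}=r$, $c_r=0$, and centrality of $\mathbb{K}$ in $L$), whereas the paper leaves this implicit in the combined citation.
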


\begin{proof}

From (\ref{rep.pre-commut}) and (\ref{rep.const}) we have that
 $A = \mathbb{K}\langle x_1,\dots, x_n\rangle/I,$ where $I$ is the two sided ideal generated by  elements of the form
\begin{equation}\label{rep.preKskewpbwext}
r_1x_1+ \cdots +r_nx_n+ x_jx_i-c_{i,j}x_ix_j.
\end{equation}
Then we conclude that $A$ is pre-Koszul.
\end{proof}

\begin{example}\label{example3.5}
According to the classifications presented in the tables of Section \ref{section2}, the following
skew $PBW$ extensions are pre-Kozul algebras: classical polynomial ring over a field; particular
Sklyanin algebra; universal enveloping algebra of a Lie algebra; algebra of linear partial
q-dilation operators; additive analogue of the Weyl algebra; multiplicative analogue of the Weyl
algebra; quantum algebra $\mathcal{U}'(so(3,K))$; dispin algebra; Woronowicz algebra;
$q$-Heisenberg algebra; multi-parameter quantum affine $n$-space; types 1, 2, 3, 4, 5 and 6 of Sridharan enveloping
algebra of 3-dimensional Lie algebras.
\end{example}

\begin{proposition}\label{prop2}
Let $A$ be a skew PBW extension. If $A$ is semi-commutative then $A$ is pre-Koszul homogeneous.
\end{proposition}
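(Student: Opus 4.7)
The plan is to argue directly from the presentation of a semi-commutative skew PBW extension recorded in part (5) of Remark \ref{notac.skewpbwext}. By that remark, if $A$ is semi-commutative then $A=\mathbb{K}\langle x_1,\dots, x_n\rangle/I$, where the two-sided ideal $I$ is generated by elements of the form $x_jx_i-c_{i,j}x_ix_j$ with $c_{i,j}\in\mathbb{K}\setminus\{0\}$.

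Next I would observe that each such generator fits the template of Definition \ref{def.varios}(i), namely a sum $\sum_i c_ix_i+\sum_{j,k}c_{j,k}x_jx_k$, in the special case where every linear coefficient $c_i$ vanishes: concretely, for the relation $x_jx_i-c_{i,j}x_ix_j$ one takes $c_{j,i}=1$, $c_{i,j}'=-c_{i,j}$ and all other $c_{s,t}$ and all $c_i$ equal to zero. Since every generator of $I$ is of this purely quadratic form, $A$ falls within Definition \ref{def.varios}(ii), so $A$ is pre-Koszul homogeneous.

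There is essentially no obstacle here; the statement is an immediate unwinding of the definitions once one has the explicit presentation from Remark \ref{notac.skewpbwext}(5). The only thing worth being careful about is to verify that the semi-commutative hypothesis truly eliminates both the scalar terms (which would come from the $\delta_i$) and the linear terms $r_1x_1+\cdots+r_nx_n$ on the right-hand side of relation (\ref{sigmadefinicion2}); this is precisely what the combination of \emph{constant} (so $\delta_i=0$ and $\sigma_i=\mathrm{id}$, killing the relations coming from (\ref{sigmadefinicion1}) entirely and forcing $r_0=0$) and \emph{quasi-commutative} (so the right-hand side of (\ref{sigmadefinicion2}) is zero, forcing all $r_i=0$) guarantees.
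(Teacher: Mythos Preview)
Your proof is correct and follows essentially the same approach as the paper: both rely on the presentation (\ref{rep.semi-comm}) from Remark \ref{notac.skewpbwext}(5) and observe that the generators $x_jx_i-c_{i,j}x_ix_j$ are purely quadratic. The only cosmetic difference is that the paper invokes Proposition \ref{prop.pre-Koszul} to first get pre-Koszulity and then notes homogeneity from (\ref{rep.semi-comm}), whereas you verify Definition \ref{def.varios}(ii) directly; the content is the same.
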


\begin{proof}
If $A$  is a semi-commutative skew PBW extension then $A$ from (\ref{rep.semi-comm}) and
Proposition \ref{prop.pre-Koszul} we get that $A$ is pre-Kozul homogeneous.
\end{proof}

\begin{example}
From Example \ref{example3.5} we obtain the following examples of pre-Kozul homogeneous skew $PBW$
extensions: classical polynomial ring over a field; particular Sklyanin algebras; multiplicative
analogue of the Weyl algebra; multi-parameter quantum affine $n$-space; the Sridharan enveloping algebra of
3-dimensional Lie algebra with $[x,y]=[y,z]=[z,x]=0$.
\end{example}

Let $B$ be a pre-Koszul algebra. One can truncate the relations  in (\ref{rep.prekosz}) leaving only their homogeneous quadratic parts. Let $B^{(0)}$ be
the obtained  algebra. Then $B^{(0)}$ is called the \emph{associated homogeneous pre-Koszul algebra} of $B$. Note that $B$ is homogeneous if and only if $B^{(0)}\cong B$ as algebras.

\begin{proposition}\label{prop.asshompreKoszul}
Let $A$ be a pre-Koszul skew PBW extension, then $A^\sigma$ is the associated homogeneous pre-Koszul algebra of $A$.
\end{proposition}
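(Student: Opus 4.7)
The plan is to unwind the three presentations in sight --- that of $A$, that of its quadratic truncation $A^{(0)}$, and that of $A^{\sigma}$ given by Proposition \ref{remarkAsigma} --- and check that they coincide.

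First I would record the presentation of $A$. Since $A$ is a skew PBW extension of the field $\mathbb{K}$, the standing convention stated just before Proposition \ref{prop.pre-Koszul} guarantees that $A$ is constant. Remark \ref{notac.skewpbwext}(3) then gives $A=\mathbb{K}\langle x_1,\dots,x_n\rangle/I$, where $I$ is the two-sided ideal generated by elements of the form
\begin{equation*}
r_0+r_1x_1+\cdots+r_nx_n+x_jx_i-c_{i,j}x_ix_j,\qquad 1\le i,j\le n.
\end{equation*}
The hypothesis that $A$ is pre-Koszul, combined with Definition \ref{def.varios}(i), forces the constant term $r_0$ to vanish in each such generator, so in fact $I$ is generated by elements
\begin{equation*}
r_1x_1+\cdots+r_nx_n+x_jx_i-c_{i,j}x_ix_j.
\end{equation*}

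Next I would apply the construction of the associated homogeneous pre-Koszul algebra: truncating each of these generators to its quadratic part yields
\begin{equation*}
A^{(0)}=\mathbb{K}\langle x_1,\dots,x_n\rangle\big/\big\langle x_jx_i-c_{i,j}x_ix_j\mid 1\le i,j\le n\big\rangle.
\end{equation*}

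Finally I would identify $A^{\sigma}$. By Proposition \ref{remarkAsigma}, $A^{\sigma}$ is the quasi-commutative skew PBW extension $\sigma(\mathbb{K})\langle z_1,\dots,z_n\rangle$ built from the same constants $c_{i,j},c_{i,r}$ as $A$. Since $A$ is constant, we have $c_{i,r}=r$ for every $r\in\mathbb{K}$, so the relations $z_ir=c_{i,r}z_i$ reduce to the trivial commutation of $z_i$ with scalars; thus $A^{\sigma}$ is semi-commutative and by Remark \ref{notac.skewpbwext}(5),
\begin{equation*}
A^{\sigma}=\mathbb{K}\langle z_1,\dots,z_n\rangle\big/\big\langle z_jz_i-c_{i,j}z_iz_j\mid 1\le i,j\le n\big\rangle.
\end{equation*}
Comparing the two presentations, the assignment $x_i\mapsto z_i$ extends to an algebra isomorphism $A^{(0)}\cong A^{\sigma}$, which is the claim. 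The only step that really requires argument is the second one --- extracting $r_0=0$ from the pre-Koszul hypothesis --- so this is where I would be most careful; after that, the proof is a direct match of presentations.
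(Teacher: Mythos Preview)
Your proof is correct and follows essentially the same route as the paper's: use the standing convention to get constancy, reduce the presentation of $A^{\sigma}$ to the relations $z_jz_i=c_{i,j}z_iz_j$, and match this against the quadratic truncation $A^{(0)}$. Your treatment is in fact more careful than the paper's---you explicitly isolate the step $r_0=0$ (which follows because a pre-Koszul ideal, being generated in $L_1\oplus L_2$, lies entirely in $L_{\ge 1}$), whereas the paper simply asserts $A^{(0)}\cong A^{\sigma}$ at the end.
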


\begin{proof}
Let $A$ be a  pre-Koszul skew PBW extension.  By Proposition \ref{remarkAsigma} there exists a quasi-commutative skew PBW extension $A^{\sigma}$ of $\mathbb{K}$ in $n$ variables $z_1,\dotsc, z_n$ defined by the relations $z_ir=c_{i,r}z_i$, $z_jz_i=c_{i,j}z_iz_j$, for $1\le i\le n$, where $c_{i,r}, c_{i,j}$ are the same constants that define $A$. Since $A$ is pre-Koszul then by Proposition \ref{prop.pre-Koszul} $A$ is constant and therefore  $A^{\sigma}$ is defined by the relations $z_jz_i=c_{i,j}z_iz_j$. Then $A^{(0)}\cong A^{\sigma}$.
\end{proof}

\subsection{Koszul algebras and skew PBW extensions}

Let $B$ be a finitely graded algebra generated in degree $1$; consider the \textit{Yoneda algebra}
of $B$ defined by
\begin{center}
$E(B):=\bigoplus_{i\geq 0}Ext_B^i(\mathbb{K},\mathbb{K})$;
\end{center}
the $Ext$ groups here are computed in the category of graded $B$-modules with graded $Hom$ spaces;
the product in $E(B)$ is defined in the following way: Let $\{P_i\xrightarrow{d_i} P_{i-1}\}_{i\geq
0}$ be a graded projective resolution of $\mathbb{K}$ that defines the groups
$Ext_B^i(\mathbb{K},\mathbb{K})$, with $P_{-1}:=\mathbb{K}$; moreover, let $\overline{f}\in
Ext_B^i(\mathbb{K},\mathbb{K})=\ker(d_{i+1}^*)/Im{f_i^*}$ with $f\in ker(d_{i+1}^*)\subseteq
Hom_B(P_i,\mathbb{K})$ and $\overline{g}\in
Ext_B^j(\mathbb{K},\mathbb{K})=\ker(d_{j+1}^*)/Im{f_j^*}$ with $g\in ker(d_{j+1}^*)\subseteq
Hom_B(P_j,\mathbb{K})$, then we define
\begin{align*}
Ext_B^i(\mathbb{K},\mathbb{K})\times Ext_B^j(\mathbb{K},\mathbb{K}) & \to Ext_B^{i+j}(\mathbb{K},\mathbb{K})\\
(\overline{f},\overline{g})& \mapsto \overline{f}\ \overline{g}:=\overline{fg'},
\end{align*}
where $g':P_{i+j}\to P_i$ is defined inductively by the following commutative diagrams:
\[
\begin{diagram}
\node[2]{P_j} \arrow{sw,t,..}{g_0} \arrow{s,r}{g} \\
\node{P_0} \arrow{e,b}{d_0} \node{\mathbb{K}}
\end{diagram}\Rightarrow
\begin{diagram}
\node[2]{P_{j+1}} \arrow{sw,t,..}{g_1} \arrow{s,r}{g_0 d_{j+1}} \\
\node{P_1} \arrow{e,b}{d_1} \node{Im(d_1)}
\end{diagram}\ \ \Rightarrow \cdots \ \Rightarrow\
\begin{diagram}
\node[2]{P_{j+i}} \arrow{sw,t,..}{g':=g_i} \arrow{s,r}{g_{i-1} d_{j+i}} \\
\node{P_i} \arrow{e,b}{d_i} \node{Im(d_i)}
\end{diagram}
\]
Can be proved that this product is well defined, i.e., it does not depend of the projective
resolution of $\mathbb{K}$ and the choosing of $g_0,g_1,\dots,g_{i-1}, g_i$; moreover, $fg'\in
\ker(d_{i+j+1}^*)$: In fact, from the step $i+1$ in the previous inductive procedure we have that
$d_{i+1}g_{i+1}=g_id_{i+j+1}$, so $fd_{i+1}g_{i+1}=fg_id_{i+j+1}$, i.e.,
$0=d_{i+1}^*(f)g_{i+1}=d_{i+j+1}^*(fg_i)$.

Thus, $E(B)$ is a graded algebra; note that the $\mathbb{K}$-vector space
$Ext_B^i(\mathbb{K},\mathbb{K})$ is graded
\begin{center}
$Ext_B^i(\mathbb{K},\mathbb{K})=\bigoplus_{j\geq 0}Ext_B^{i,j}(\mathbb{K},\mathbb{K})$,
\end{center}
with
\begin{center}
$Ext_B^{i,j}(\mathbb{K},\mathbb{K}):=(Ext_B^i(\mathbb{K},\mathbb{K}))_{-j}:=Ext_B^i(\mathbb{K},\mathbb{K}(-j))$,
\end{center}
so setting $E^{i,j}(B):=Ext_B^{i,j}(\mathbb{K},\mathbb{K})$ we get that
\begin{center}
$E(B)=\bigoplus_{i,j\geq 0}E^{i,j}(B)$
\end{center}
is a bigraded algebra. For $i\geq 0$, we write
\begin{center}
$E^i(B):=\bigoplus_{j\geq 0}E^{i,j}(B)$;
\end{center}
in particular,
\begin{center}
$E^0(B)=\bigoplus_{j\geq 0}Hom_B^j(\mathbb{K},\mathbb{K})=\bigoplus_{j\geq
0}(Hom_B(\mathbb{K},\mathbb{K}))_{-j}=\bigoplus_{j\geq 0}Hom_B(\mathbb{K},\mathbb{K}(-j))$,
\end{center}
with $Hom_B(\mathbb{K},\mathbb{K}(-j)):=\{f\in
Hom_B(\mathbb{K},\mathbb{K})|f(\mathbb{K}_l)\subseteq \mathbb{K}_{l-j}, l\in \mathbb{Z}\}$.

\begin{definition}\label{def.homKosz}
Let $B$ be a homogeneous pre-Koszul algebra, $B$  is called \textit{homogeneous Koszul} if the
following equivalent conditions hold:
\begin{enumerate}
\item[\rm (i)]  $Ext^{i,j}_B(\mathbb{K}, \mathbb{K})=0$ for $i\neq j$;
\item[\rm (ii)] $E(B)$ is generated by $E^{1,1}(B)$;
\item[\rm (iii)] The module $\mathbb{K}$ admits a \emph{linear free resolution}, i.e., a resolution by free $B$-modules
\[
\cdots \to P_2\to P_1\to P_0\to \mathbb{K}\to 0
\]
such that $P_i$ is generated in degree $i$.
\end{enumerate}
\end{definition}

\begin{definition}[\cite{Priddy1970}, Page 43]\label{def.Koszul}
 We say that a pre-Koszul algebra $B$ is a \emph{Koszul algebra} if $B^{(0)}$  is a homogeneous
Koszul algebra.
\end{definition}

\begin{remark}\label{Koszul impl homKos}
Notice that if $B$ is homogeneous Koszul algebra  then $B$ is Koszul. In fact, as $B$ is homogeneous then  $B^{(0)}\cong B$ as algebras and so $B^{(0)}$ is homogeneous Koszul, therefore $B$ is Koszul.
\end{remark}

 In the current literature, homogeneous Koszul algebras are called simply Koszul algebras. Some authors have studied Koszul algebras defined by Priddy in \cite{Priddy1970}. For example Koszul algebras are defined in \cite{Polishchuk}, analogous to the Definition \ref{def.Koszul}. Let $P\subseteq \mathbb{K}\bigoplus L_1 \bigoplus L_2$ a subspace of $F_2(L)$ and $A= L/\langle P\rangle$. Let $A^{(0)}= L/\langle R\rangle$, where $R$ is obtained by taking homogeneous part of $P$. $A$ is said to be (nonhomogeneous) Koszul if $A^{(0)}$ is homogeneous Koszul (see \cite{Polishchuk}, page 140). In \cite{LodayVallete2012} Koszul algebras are defined as follows. Let $V$ a graded vector space and a degree homogeneous subspace  $P\subseteq V\bigoplus V^{\otimes 2}$, the algebra $A=T(V)/\langle P\rangle$ is called (nonhomogeneous quadratic) Koszul if $P\cap V=\{0\}$, $\{P\otimes V + V\otimes P\}\cap V^{\otimes 2}\subseteq P\cap V^{\otimes 2}$ and $T(V)/\langle \pi(P) \rangle$ is homogeneous Koszul, where $\pi$ : $T(V)\twoheadrightarrow V^{\otimes 2}$ is the projection onto the quadratic part of the tensor algebra.
  R. Berger in \cite{Berger7} defined the notion of $N$-Koszul algebra, if $N = 2$, the  notion of homogeneous Koszul algebra is obtained. To avoid confusion, we still use the names given in the Definition \ref{def.homKosz} (homogeneous Koszul) and the Definition \ref{def.Koszul} (Koszul).\\

Let $L=\mathbb{K}\langle x_1,\dots, x_n\rangle$ the free associative algebra  in $n$ generators $x_1,\dots, x_n$. Let  $R$ a subspace of $F_2(L)=\mathbb{K}\bigoplus L_1\bigoplus L_2$, the algebra $L/\langle R\rangle$ is called (nonhomogeneous) \emph{quadratic algebra}. $L/\langle R\rangle$ is called \emph{homogeneous quadratic algebra} if $R$ is a subspace of $L_2$, for $\langle R\rangle$ the two-sided ideal of $L$ generated by $R$.  Let $A = \mathbb{K}\langle x_1,\dots, x_n\rangle/\langle R\rangle$ be a quadratic algebra with a fixed generators $\{x_1,\dots, x_n\}$. For a multiindex $\alpha:=(i_1,\dots, i_m)$, where $1\leq i_k\leq n$, we denote the monomials in $\mathbb{K}\langle x_1,\dots, x_n\rangle$ by  $x^{\alpha}:=x_{i_1}x_{i_2}\cdots x_{i_m}$. For $\alpha=\emptyset$ we set $x^{\emptyset}:=1$. Now let us equip the subspace $L_2$ with the basis consisting of the monomials $x_{i_1}x_{i_2}$. Let $S^{(1)}:=\{1, 2,\dots,n\}$, $S^{(1)}\times S^{(1)}$ the cartesian product, then for $R\subseteq L_2$ we obtain the set $S\subseteq S_1\times S_1$ of  pairs of indices $(l, m)$ for which the class of $x_lx_m$ in $L_2/R$ is not in the span of the classes of $x_rx_s$ with $(r, s) < (l, m)$, where $<$ denotes the lexicographical order (\cite{Polishchuk}, 4.1-Lemma 1.1). Hence, the relations in $A$ can be written in the following form:

\[
x_ix_j =\sum_{\substack{(r,s)<(i,j)\\(r,s)\in S}}c^{rs}_{ij} x_rx_s,\qquad (i, j) \in S^{(1)}\times S^{(1)}\ \backslash\ S.
\]

Define further $S^{(0)}:= \{\emptyset\}$,  and for $m\geq 2$,
\[
S^{(m)}:= \{(i_1,\dots, i_m) \mid (i_k, i_{k+1}) \in S, \ k = 1,\dots, m - 1\}
\]
and consider the monomials $\{x_{i_1}\cdots x_{i_m} \in A_m \mid  (i_1,\dots, i_m) \in S^{(m)}\}$. Note that these monomials always span $A_m$ as a vector space and and the monomials
\begin{equation}\label{eq.monPBW}
(A,S):=\{x_{i_1}\cdots x_{i_m} \mid  (i_1,\dots, i_m) \in \cup_{m> 0}S^{(m)}\}
\end{equation}
linearly span the entire $A$.  We call $(A,S)$ in (\ref{eq.monPBW}) a \emph{PBW-basis} of $A$ if they are linearly independent and hence form a $\mathbb{K}$-linear basis.  The elements $x_1, \dots, x_n$ are called PBW-\emph{generators} of $A$. A \emph{PBW-algebra} is a homogeneous quadratic algebra admitting a PBW-basis, i.e., there exists a permutation of $x_1,\dots, x_n$ such that the standard monomials in $x_1,\dots, x_n$ conform a $\mathbb{K}$-basis of $A$.

\begin{proposition}\label{prop.PBW alg}
Let $A$ be a semi-commutative skew PBW extension. Then $A$ is a PBW algebra.
 \end{proposition}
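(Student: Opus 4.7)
The plan is to exhibit the standard monomials of $A$ as a PBW-basis in the sense of Polishchuk, using the presentation of a semi-commutative skew PBW extension given in Remark \ref{notac.skewpbwext}(5) together with condition (ii) of Definition \ref{def.skewpbwextensions}. So the proof essentially reduces to matching two notations.

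First I would observe that by Remark \ref{notac.skewpbwext}(5), a semi-commutative skew PBW extension $A=\sigma(\mathbb{K})\langle x_1,\dots,x_n\rangle$ admits the presentation $A=L/\langle R\rangle$ where $L=\mathbb{K}\langle x_1,\dots,x_n\rangle$ and $R$ is spanned by the purely quadratic elements $x_jx_i-c_{i,j}x_ix_j$ with $c_{i,j}\in \mathbb{K}\setminus\{0\}$ and $1\le i,j\le n$. Since $R\subseteq L_2$, the algebra $A$ is a homogeneous quadratic algebra, and hence the framework for PBW-algebras recalled just before the proposition applies.

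Next, keeping the order $x_1<x_2<\cdots<x_n$ on the generators, I would define
\[
S:=\{(i,j)\in S^{(1)}\times S^{(1)}\mid i\le j\}.
\]
For each pair $(j,i)\notin S$ (so $j>i$) the relation $x_jx_i=c_{i,j}x_ix_j$ expresses the class of $x_jx_i$ in $A_2$ as a scalar multiple of $x_ix_j$, where $(i,j)\in S$ and $(i,j)<(j,i)$ in the lexicographical order. This is precisely the normal form required by Polishchuk's Lemma 1.1 in Chapter 4, so $S$ is the set attached to $A$ with respect to the given generators. Consequently,
\[
S^{(m)}=\{(i_1,\dots,i_m)\mid i_1\le i_2\le\cdots\le i_m\},
\]
and the candidate PBW monomials $\{x_{i_1}\cdots x_{i_m}\mid (i_1,\dots,i_m)\in S^{(m)}\}$ coincide exactly with the set of standard monomials $\{x_1^{\alpha_1}\cdots x_n^{\alpha_n}\mid \alpha_1+\cdots+\alpha_n=m\}$ of degree $m$.

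Finally I would invoke condition (ii) of Definition \ref{def.skewpbwextensions}, which guarantees that $\mathrm{Mon}(A)=\{x_1^{\alpha_1}\cdots x_n^{\alpha_n}\}$ is a $\mathbb{K}$-basis of $A$; in particular, restricting to each degree $m$, these monomials are $\mathbb{K}$-linearly independent and span $A_m$. Hence $(A,S)$ is a PBW-basis, and $A$ is a PBW-algebra by definition. There is no real obstacle here: the skew PBW hypothesis supplies the basis and the semi-commutative hypothesis is exactly what strips the defining relations down to pure quadratic form, so the entire proof is a bookkeeping matching of two equivalent descriptions of the same basis.
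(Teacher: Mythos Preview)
Your proof is correct and follows essentially the same route as the paper's: present $A$ via Remark \ref{notac.skewpbwext}(5) as a homogeneous quadratic algebra, identify $S=\{(i,j)\mid i\le j\}$ so that $(A,S)=\mathrm{Mon}(A)$, and then invoke Definition \ref{def.skewpbwextensions}(ii) to conclude that this is a $\mathbb{K}$-basis. The only cosmetic difference is that the paper explicitly records the forward direction (that for $i\le j$ the class of $x_ix_j$ is not in the span of lexicographically smaller monomials), which you subsume into the appeal to Polishchuk's Lemma 1.1.
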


 \begin{proof}
  If $A=\sigma(\mathbb{K})\langle x_1,\dots, x_n\rangle$ is a semi-commutative skew PBW extension, then \linebreak $A=\mathbb{K}\langle x_1,\dots, x_n\rangle/\langle x_jx_i-c_{i,j}x_ix_j\rangle$ (as in  (\ref{rep.semi-comm})) is a homogeneous quadratic algebra with generators $x_1,\dots, x_n$ and relations  $x_jx_i-c_{i,j}x_ix_j$. Using the above notation we have that for $1\leq i \leq j\leq n$, the class of $x_ix_j$ is not in the span of the classes of $x_rx_s$ with $(r, s) < (i,j)$, but, the class of  $x_jx_i$ is in the span of the class of $x_ix_j$ with $(i,j)<(j,i)$. Therefore $S=\{(i,j)\mid 1\leq i \leq j\leq n\}=S^{(2)}$ and  $S^{(m)} = \{(i_1,\dots, i_m)\mid i_1\leq i_2\leq\cdots \leq i_m, \ 1\leq i_k \leq n\}$ for $m\geq 3$. Then
  \[
  (A,S)= \{x_1^{m_1}\cdots x_n^{m_n}\mid m_1,\dots, m_n\geq 0\}= {\rm Mon}(A):= \{x_1^{\alpha_1}\cdots
x_n^{\alpha_n}\mid (\alpha_1,\dots ,\alpha_n)\in
\mathbb{N}^n\}.
 \]

By Definition  \ref{def.skewpbwextensions} \rm{(ii)}, ${\rm Mon}(A)$ is a $\mathbb{K}$-basis  for $A$ and therefore $A$ is a PBW algebra.
\end{proof}

 \begin{theorem}[\cite{Priddy1970}, Theorem 5.3 ]\label{teo.PBW alg is Koszul}
 If $B$ is a PBW algebra then $B$ is a homogeneous Koszul algebra.
  \end{theorem}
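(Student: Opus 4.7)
The plan is to verify condition (iii) of Definition~\ref{def.homKosz} by constructing an explicit linear free resolution of the trivial module $\mathbb{K}$ as a left $B$-module, exploiting the PBW basis $(B,S)$ described just before Proposition~\ref{prop.PBW alg}. Linearity will be automatic from the construction, so the real content is the exactness of the complex.

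First, for each $m\geq 0$ I would let $V_m$ be the $\mathbb{K}$-vector space spanned by formal symbols $[i_1|\cdots|i_m]$ indexed by the admissible tuples $(i_1,\dots,i_m)\in S^{(m)}$, viewed as concentrated in internal degree $m$, and set $P_m:=B\otimes_{\mathbb{K}}V_m$. Each $P_m$ is then a graded free left $B$-module generated in degree $m$, so any resolution built out of the $P_m$ will be linear by construction. Next I would introduce $B$-linear differentials $d_m:P_m\to P_{m-1}$ by peeling off the leftmost index: on the generator $1\otimes[i_1|i_2|\cdots|i_m]$ the map sends it to $x_{i_1}\otimes[i_2|\cdots|i_m]$ plus correction terms obtained by rewriting the prefix $x_{i_1}x_{i_2}$ via the quadratic relation $x_{i_1}x_{i_2}=\sum_{(r,s)<(i_1,i_2)}c^{rs}_{i_1i_2}\,x_rx_s$ whenever this is required so as to land on admissible tuples. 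The augmentation $\varepsilon:P_0\to\mathbb{K}$ is the canonical projection onto $B_0=\mathbb{K}$.

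The identity $d_{m-1}\circ d_m=0$ then reduces to a purely local check on admissible triples of length three, where it expresses precisely the diamond/confluence condition encoded in the existence of a PBW basis: the two different ways of reducing a non-standard monomial of length three into the PBW basis must agree. This is where the PBW hypothesis enters essentially.

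The main obstacle is exactness of $(P_\bullet,d_\bullet)$. Here I would filter the complex by the leading PBW monomial of the $B$-coefficient and pass to the associated graded complex; on the associated graded the picture collapses to the ``monomial'' algebra cut out purely by the combinatorics of $S$, for which exactness admits an explicit contracting homotopy (roughly, insert the leading index of the coefficient monomial at the front of the admissible word, provided the extended tuple still lies in some $S^{(m+1)}$). A standard spectral sequence / comparison argument lifts the acyclicity back to the original complex. Since each $P_m$ is generated in degree $m$, the resulting resolution is linear, so condition (iii) of Definition~\ref{def.homKosz} is satisfied and $B$ is homogeneous Koszul.
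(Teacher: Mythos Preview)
The paper does not supply its own proof of this theorem; it cites Priddy and then remarks that a proof may also be found in \cite{Polishchuk}, Chapter~4, Theorem~3.1. So there is no ``paper's proof'' to compare against, only the cited literature.

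Your proposal, however, contains a genuine error in the construction of the complex. You take $V_m$ to be spanned by the tuples in $S^{(m)}$, i.e.\ the \emph{admissible} sequences (those for which every adjacent pair lies in $S$). But $|S^{(m)}|=\dim_{\mathbb{K}}B_m$, so if your complex were a linear free resolution of $\mathbb{K}$ one would get $h_B(-t)\,h_B(t)=1$ from the Euler characteristic, which fails already for $B=\mathbb{K}[x]$. The ranks in a linear resolution of $\mathbb{K}$ must equal $\dim_{\mathbb{K}}(B^{!})_m$, and for a PBW algebra these are counted by the \emph{anti-admissible} sequences, namely those $(i_1,\dots,i_m)$ with every adjacent pair in $\bar S:=S^{(1)}\times S^{(1)}\setminus S$. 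Moreover, your differential cannot square to zero as written: when $(i_1,\dots,i_m)\in S^{(m)}$ the pair $(i_1,i_2)$ already lies in $S$, so $x_{i_1}x_{i_2}$ is a PBW monomial and there are no ``correction terms'' to cancel the leading contribution $x_{i_1}x_{i_2}\otimes[i_3|\cdots|i_m]$ in $d_{m-1}d_m$.

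Both standard references fix this in one of two ways. Priddy builds the complex on the anti-admissible sequences; the differential peels off $x_{i_1}$ and the correction terms genuinely appear because now $(i_1,i_2)\in\bar S$ forces a rewriting $x_{i_1}x_{i_2}=\sum c^{rs}_{i_1i_2}x_rx_s$, and the diamond condition from the PBW hypothesis is exactly what makes $d^2=0$. Polishchuk--Positselski instead filter $B$ by the leading PBW monomial so that the associated graded is the quadratic \emph{monomial} algebra $T(V)/\langle x_ix_j:(i,j)\in\bar S\rangle$; they prove separately that quadratic monomial algebras are Koszul, and then invoke the general principle that a filtered quadratic algebra with Koszul associated graded is itself Koszul. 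Your filtration/contracting-homotopy step is essentially the latter argument, but it should be applied at the level of the algebra (or of the correct anti-admissible complex), not to the complex you wrote down.
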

The Theorem \ref{teo.PBW alg is Koszul} and this proof  can also be found in  \cite{Polishchuk}, Theorem 3.1, page 84; there they also present an example of a homogeneous Koszul algebra which is not PBW algebra.

 \begin{corollary}\label{cor. semi hom kosz}
 Every semi-commutative skew PBW extension is  homogeneous Koszul algebra.
 \end{corollary}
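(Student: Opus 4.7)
The plan is to obtain the corollary as an immediate composition of the two results that directly precede it. Let $A$ be a semi-commutative skew PBW extension. First, I would invoke Proposition \ref{prop.PBW alg} to conclude that $A$ is a PBW algebra in the sense defined just before that proposition; this step requires nothing new, since the proof of \ref{prop.PBW alg} already exhibits the explicit set $S$ and verifies that $\mathrm{Mon}(A)$ is the PBW-basis guaranteed by part (ii) of Definition \ref{def.skewpbwextensions}.

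Second, I would apply Theorem \ref{teo.PBW alg is Koszul} (Priddy's theorem) to deduce that, being a PBW algebra, $A$ is homogeneous Koszul. Formally one should first note that a semi-commutative skew PBW extension is a homogeneous quadratic algebra, so the hypothesis of Theorem \ref{teo.PBW alg is Koszul} is satisfied; this is clear from the presentation in (\ref{rep.semi-comm}), where the defining relations $x_jx_i-c_{i,j}x_ix_j$ lie entirely in $L_2$.

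Since both steps are verbatim applications of previously stated results, there is no genuine obstacle. The only thing to be careful about is terminology: the statement of Theorem \ref{teo.PBW alg is Koszul} uses "PBW algebra" in the sense of a homogeneous quadratic algebra with a PBW-basis, and one should make sure that the PBW-basis $(A,S)=\mathrm{Mon}(A)$ produced by Proposition \ref{prop.PBW alg} matches this definition. This matching is essentially built into the proof of Proposition \ref{prop.PBW alg}, so the corollary follows in a single line by composing the two results.
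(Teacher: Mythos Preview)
Your proposal is correct and matches the paper's own proof exactly: the paper simply states that the result follows from Proposition~\ref{prop.PBW alg} and Theorem~\ref{teo.PBW alg is Koszul}. Your additional remarks about checking that the presentation (\ref{rep.semi-comm}) is homogeneous quadratic and that the PBW-basis notions agree are accurate but go beyond what the paper itself records.
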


 \begin{proof}
 If following from Proposition \ref{prop.PBW alg} and Theorem \ref{teo.PBW alg is Koszul}.
 \end{proof}

\begin{theorem}\label{teo.Koszul sew PBW ext}
Every pre-commutative skew PBW extension is Koszul.
\end{theorem}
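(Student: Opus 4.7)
The plan is to assemble the statement by chaining the propositions already proved in this section. Specifically, I would invoke Proposition \ref{prop.pre-Koszul} to turn \emph{pre-commutative} into \emph{pre-Koszul}, then use Proposition \ref{prop.asshompreKoszul} to identify the associated homogeneous pre-Koszul algebra $A^{(0)}$ with the quasi-commutative extension $A^{\sigma}$ produced by Proposition \ref{remarkAsigma}, and finally reduce to Corollary \ref{cor. semi hom kosz} to conclude that $A^{(0)}$ is homogeneous Koszul; by Definition \ref{def.Koszul} this is exactly what is needed for $A$ to be Koszul.

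First I would observe that since $A=\sigma(\mathbb{K})\langle x_1,\dots,x_n\rangle$ is a skew PBW extension of a field (the standing hypothesis fixed in the remark preceding Proposition \ref{prop.pre-Koszul}), $A$ is automatically constant. Coupled with the hypothesis that $A$ is pre-commutative, Proposition \ref{prop.pre-Koszul} gives that $A$ is pre-Koszul. Next, Proposition \ref{prop.asshompreKoszul} tells us $A^{(0)}\cong A^{\sigma}$, where $A^{\sigma}$ is the quasi-commutative skew PBW extension supplied by Proposition \ref{remarkAsigma}, defined by the relations $z_jz_i=c_{i,j}z_iz_j$ with the same scalars $c_{i,j}\in\mathbb{K}$ that govern $A$.

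The key point to verify is that $A^{\sigma}$ is not merely quasi-commutative but semi-commutative. Since $R=\mathbb{K}$ is a field and the defining structural scalars $c_{i,r}$ and $c_{i,j}$ all lie in $\mathbb{K}$, every element of $\mathbb{K}$ is a constant in the sense of Definition \ref{sigmapbwderivationtype}(d); therefore $A^{\sigma}$ is constant as well. Being simultaneously quasi-commutative and constant, $A^{\sigma}$ is semi-commutative by Definition \ref{sigmapbwderivationtype}(e).

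With semi-commutativity in hand, Corollary \ref{cor. semi hom kosz} applies to $A^{\sigma}$, yielding that $A^{\sigma}$ is a homogeneous Koszul algebra. Since $A^{(0)}\cong A^{\sigma}$, the algebra $A^{(0)}$ is homogeneous Koszul, and hence by Definition \ref{def.Koszul} the original pre-Koszul algebra $A$ is Koszul. The main obstacle to watch in writing this up is purely notational/bookkeeping: one must be careful that the passage from the pre-commutative relations (\ref{rep.pre-commut}) to their homogeneous quadratic parts really produces the relations $z_jz_i=c_{i,j}z_iz_j$ of $A^{\sigma}$ (and no spurious linear terms), which is exactly what Proposition \ref{prop.asshompreKoszul} already records. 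No genuine new computation is required beyond this verification.
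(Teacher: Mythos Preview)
Your argument is correct and matches the paper's proof essentially step for step: pre-commutative $\Rightarrow$ pre-Koszul via Proposition~\ref{prop.pre-Koszul}, then $A^{(0)}\cong A^{\sigma}$ via Proposition~\ref{prop.asshompreKoszul}, then $A^{\sigma}$ semi-commutative hence homogeneous Koszul via Corollary~\ref{cor. semi hom kosz}, giving Koszulity of $A$ by Definition~\ref{def.Koszul}. The only difference is that you spell out explicitly why $A^{\sigma}$ is semi-commutative (constant plus quasi-commutative), whereas the paper simply asserts it.
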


\begin{proof}

If $A$ is a pre-commutative skew PBW extension  then by Remark \ref{notac.skewpbwext},
$A= \mathbb{K}\langle x_1,\dots, x_n\rangle/I$, where $I$ is the two-sided ideal generated by relations of the form $x_jx_i-c_{i,j}x_ix_j+\sum_{t=1}^nk_tx_t$, $0 \neq c_{i,j}, k_t\in \mathbb{K}$, $1\leq i,j,t \leq n$. By Proposition \ref{prop.pre-Koszul} $A$ is pre-Koszul, therefore from   Proposition \ref{prop.asshompreKoszul}, $ A^{(0)}=A^{\sigma}=\mathbb{K}\langle x_1,\dots, x_n\rangle/\langle x_jx_i-c_{i,j}x_ix_j \rangle$ is the associated homogeneous pre-Koszul algebra of $A$. Note that $A^{\sigma}$ is semi-commutative, so by Corollary \ref{cor. semi hom kosz}, $A^{(0)}$ is a homogeneous Koszul algebra, i.e., $A$ is Koszul.
 \end{proof}

\begin{corollary}
If $A$ is a pre-commutative  skew PBW extension then $Gr(A)$ is homogeneous Koszul.
\end{corollary}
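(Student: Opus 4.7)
The plan is to chain together three results already in place in the paper: the description of the associated graded ring of a skew PBW extension, the construction of the quasi-commutative extension $A^{\sigma}$, and the Koszulity of semi-commutative extensions. First I would invoke the standing convention (stated just before Proposition \ref{prop.pre-Koszul}) that every skew PBW extension considered here is an extension of the field $\mathbb{K}$ and is therefore automatically constant. In particular, the pre-commutative hypothesis on $A$ combines with constancy, so $A$ falls under the presentation (\ref{rep.preKskewpbwext}).

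Next I would apply Theorem \ref{teo.Gr(A)}, which yields the isomorphism of graded $\mathbb{K}$-algebras
\[
Gr(A) \;\cong\; A^{\sigma}.
\]
By Proposition \ref{remarkAsigma}, $A^{\sigma}$ is the quasi-commutative skew PBW extension defined by the same structure constants $c_{i,r}, c_{i,j}$ used in $A$. Since $A$ is constant, the $c_{i,r}$ that appear are trivial and $A^{\sigma}$ is, in turn, constant; being both quasi-commutative and constant, $A^{\sigma}$ is semi-commutative in the sense of Definition \ref{sigmapbwderivationtype}(e).

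Finally I would apply Corollary \ref{cor. semi hom kosz} to conclude that $A^{\sigma}$ is homogeneous Koszul, and transport this along the isomorphism above to deduce that $Gr(A)$ is homogeneous Koszul. There is no genuine obstacle here: the argument is essentially a bookkeeping corollary of Theorem \ref{teo.Koszul sew PBW ext}, whose proof already identifies $A^{(0)}$ with $A^{\sigma}$ and notes that this algebra is semi-commutative. The only point requiring mild care is confirming that the constancy of $A$ transfers to $A^{\sigma}$, which is immediate from Proposition \ref{remarkAsigma} since the two extensions share their structure constants.
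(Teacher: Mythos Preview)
Your argument is correct and is exactly the intended one: the paper states the corollary without proof immediately after Theorem \ref{teo.Koszul sew PBW ext}, and the implicit justification is precisely the chain you wrote---$Gr(A)\cong A^{\sigma}$ by Theorem \ref{teo.Gr(A)}, $A^{\sigma}$ is semi-commutative because $A$ is constant, and then Corollary \ref{cor. semi hom kosz} applies. There is nothing to add.
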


\begin{examples}\label{ex of homog koszul} Next we present some examples of homogeneous Koszul skew PBW extensions,
many of which had already been presented by other authors with the name of Koszul algebras. For
this purpose we use the classification given in Subsection \ref{ex and classif} and Corollary \ref{cor. semi hom kosz}: classical polynomial ring; particular Sklyanin algebra; Algebra of linear partial q-dilation ope\-ra\-tors; multiplicative analogue of the Weyl algebra;
multi-parameter quantum affine $n$-spaces; the Sridharan enveloping algebra of 3-dimensional Lie algebra with
$[x,y]=[y,z]=[z,x]=0$.
\end{examples}

\begin{examples}\label{ex of koszul} Recall that every homogeneous Koszul algebra is Koszul
(Remark \ref{Koszul impl homKos}), so Examples \ref{ex of homog koszul} are Koszul skew PBW extensions.
According to  classification given in Subsection \ref{ex and classif} and Theorem \ref{teo.Koszul sew PBW ext} the next skew PBW extensions are Koszul:
universal enveloping algebra of a Lie algebra, with $\mathbb{K}$ a field; diffusion algebra 1; quantum algebra;
Dispin algebra; Woronowicz algebra; $q$-Heisenberg algebra; types 1, 2, 3, 4, 5 and 6 of Sridharan enveloping algebra of 3-dimensional Lie algebra  (E\-xam\-ple \ref{sridharan}).
\end{examples}

Note that some particular classes of skew PBW extensions in  Examples \ref{ex of homog koszul} and \ref{ex of koszul} represent the same algebra. For example, Sridharan enveloping algebra of 3-dimensional Lie algebra of type 1 and the classical polynomial ring $\mathbb{K}[x,y,z]$ are the same algebra.

\section{PBW deformations}\label{sect.PBW defor}

Let $V$ be a vector space over a field $\mathbb{K}$ and let $T(V) =\bigoplus T^i(V)$ be its tensor algebra over $\mathbb{K}$. Consider the natural filtration $F_i(T)=\{\bigoplus T^j(V)\mid j\leq i\}$ of $T(V)$. Fix a subspace $P\subseteq F_2(T)=\mathbb{K}\bigoplus V\bigoplus (V\otimes V)$, and let us consider the two-sided
ideal $\langle P\rangle$ in $T(V)$  generated by $P$. Let $A=T(V)/\langle P\rangle$ be a nonhomogeneous quadratic algebra. It inherits a filtration $A_0\subseteq A_1 \subseteq\cdots A_n \subseteq\cdots$ from $T(V)$, let  $Gr(A)$  the associated graded algebra. Consider the natural projection $\pi : F_2 (T)=\mathbb{K}\bigoplus V\bigoplus (V\otimes V)\to V\otimes V$ on the homogeneous component, set $R=\pi(P)$ and consider the homogeneous quadratic algebra $T(V)/\langle R\rangle$. $T(V)/\langle R\rangle$ is called the \emph{homogeneous version}  (or the \emph{induced
homogeneous quadratic}) algebra of $A$ determined by $P$.  We have the natural epimorphism $p: T(V)/\langle R\rangle \to Gr(A)$ (induced by the projection $T(V)\to A$).

\begin{definition}[\cite{Braverman}, Page 316]\label{def.PBW deformation} With the above notation, a nonhomogeneous quadratic algebra $A:=T(V)/\langle P\rangle$ is a \emph{Poincar\'e-Birkhoff-Witt {\rm (PBW)} deformation}  (or satisfies the PBW \emph{property} with respect to the subspace $P$ of $F_2(T)$)  of $B:=T(V)/\langle R\rangle$ if the natural projection $p: T(V)/\langle R\rangle)\to Gr (A)$ is an isomorphism.
\end{definition}

\begin{proposition}[\cite{LodayVallete2012}, Theorem 3.6.4]
 Let $A=T(V)/\langle P\rangle$ with $P\subseteq V\bigoplus V^{\otimes 2}$, $P\cap V =\{0\}$ and $\{ P\otimes V + V\otimes P\}\cap V^{\otimes 2}\subseteq P\cap V^{\otimes 2}$. If $A$ is Koszul, then the epimorphism $p$: $B=T(V)/\langle R \rangle\to Gr(A)$ is an isomorphism of graded algebras, i.e., $A$ is a PBW deformation of $B$.
\end{proposition}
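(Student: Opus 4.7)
The plan is to establish that the canonical graded surjection $p \colon B = T(V)/\langle R\rangle \to Gr(A)$ is injective, hence an isomorphism; equivalently, I must show $\dim_{\mathbb{K}} Gr(A)_n = \dim_{\mathbb{K}} B_n$ for every $n \ge 0$. The first step is to exploit $P \cap V = \{0\}$ to decompose every element of $P$ uniquely as $r - \varphi(r) - \psi(r)$ with $r \in R$, $\varphi(r) \in V$ and $\psi(r) \in \mathbb{K}$, thereby encoding the nonhomogeneous part of the defining relations of $A$ by $\mathbb{K}$-linear maps $\varphi \colon R \to V$ and $\psi \colon R \to \mathbb{K}$. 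The second hypothesis $\{P \otimes V + V \otimes P\} \cap V^{\otimes 2} \subseteq P \cap V^{\otimes 2}$ then translates into Braverman--Gaitsgory style compatibilities for $\varphi$ and $\psi$ on the subspace $(R \otimes V) \cap (V \otimes R) \subseteq V^{\otimes 3}$, which are precisely the conditions that resolve the cubic ambiguities arising when one tries to rewrite monomials of $A$ into normal forms indexed by a basis of $B$.

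The decisive step is then to invoke Koszulness. By the definition recalled in the excerpt, $A$ being Koszul forces $B = T(V)/\langle R\rangle$ to be homogeneous Koszul, so its Koszul complex is acyclic. I would extend $\varphi$ and $\psi$ inductively to consistent correction maps in every degree $m \ge 4$; at each stage the obstruction to continuing the extension lies in a specific component of $HH^2(B,B)$, and acyclicity of the Koszul complex forces this obstruction to vanish. The outcome is that the rewriting system on $T(V)$ induced by $\varphi$ and $\psi$ is globally confluent, so that any chosen $\mathbb{K}$-basis of $B$ lifts to a $\mathbb{K}$-basis of $A$ compatible with the filtration $F_\bullet(A)$.

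From confluence I obtain $\dim_{\mathbb{K}} F_n(A)/F_{n-1}(A) = \dim_{\mathbb{K}} B_n$ for every $n$, which together with surjectivity of $p$ yields the isomorphism $B \cong Gr(A)$ and hence the PBW property. The main obstacle is the inductive step in the second paragraph: the first-order compatibility derived from the two hypotheses on $P$ only resolves the cubic obstructions, and it is exactly homogeneous Koszulness of $B$ that propagates them to all higher degrees, via either the Braverman--Gaitsgory deformation machinery or, equivalently, the Polishchuk--Positselski perturbation analysis of the Koszul complex.
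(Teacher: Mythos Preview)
The paper does not supply a proof of this proposition at all: it is quoted verbatim from Loday--Vallette (\cite{LodayVallete2012}, Theorem~3.6.4) and used as a black box. Consequently there is no ``paper's own proof'' to compare your attempt against.

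That said, your sketch is essentially the standard Braverman--Gaitsgory argument, which is indeed the route taken in the references the paper cites (\cite{Braverman}, \cite{Polishchuk}, \cite{LodayVallete2012}). A couple of remarks on accuracy: your identification of where the obstructions live is slightly imprecise---in the Braverman--Gaitsgory setup the $m$-th obstruction does not lie in $HH^2(B,B)$ but rather in the degree-$m$ homology of the Koszul (or cobar) complex, which vanishes precisely because $B$ is homogeneous Koszul; the Hochschild-cohomological interpretation enters in the deformation-theoretic reformulation, not at the level of the PBW inequality itself. Also, your phrase ``$A$ being Koszul forces $B$ to be homogeneous Koszul'' is correct but understated: in the paper's Definition~\ref{def.Koszul} this is the \emph{definition} of $A$ being Koszul, so nothing needs to be forced. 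With those caveats, your outline is a faithful summary of the proof that the cited sources actually give.
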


\begin{corollary}[\cite{LodayVallete2012}, Corollary 3.6.5] Let $A=T(V)/\langle P\rangle$, with $P\subseteq V\bigoplus V^{\otimes 2}$.
If $B= T(V)/\langle R\rangle$ is homogeneous Koszul, then:

\begin{enumerate}
\item $P\cap V =\{0\}\Leftrightarrow \langle P \rangle \cap V =\{0\}$
\item $\{ P\otimes V + V\otimes P\}\cap V^{\otimes 2}\subseteq P\cap V^{\otimes 2} \Leftrightarrow \langle P \rangle \cap \{V\bigoplus V^{\otimes 2}\}=P$.
\end{enumerate}
\end{corollary}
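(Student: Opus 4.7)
The plan is to handle the easy direction of both biconditionals uniformly, then to reduce the hard directions to the Loday--Vallette PBW criterion (the proposition immediately preceding this corollary) and extract the conclusions degree by degree from the resulting isomorphism $p: B \to Gr(A)$.

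For the $\Leftarrow$ direction of (1), from $P \subseteq \langle P\rangle$ we get $P\cap V \subseteq \langle P\rangle \cap V = \{0\}$. For the $\Leftarrow$ direction of (2), any $\xi \in \{P\otimes V + V\otimes P\}\cap V^{\otimes 2}$ lies in $\langle P\rangle$ (since $P\otimes V$ and $V\otimes P$ sit in $\langle P\rangle$) and in $V\oplus V^{\otimes 2}$, hence in $\langle P\rangle \cap \{V\oplus V^{\otimes 2}\} = P$; intersecting with $V^{\otimes 2}$ gives $P\cap V^{\otimes 2}$.

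For the $\Rightarrow$ directions, observe that when both left-hand sides hold we are exactly in the hypotheses of the preceding proposition (Loday--Vallette 3.6.4). Since $B$ is homogeneous Koszul, $A^{(0)}\cong B$ makes $A$ Koszul in the sense of Priddy, so that proposition yields an isomorphism $p: B \xrightarrow{\sim} Gr(A)$ of graded algebras. In each degree $n$, the fact that $p_n: B_n \to F_n(A)/F_{n-1}(A)$ is an isomorphism translates into the key identity
\[
V^{\otimes n} \cap \bigl(F_{n-1}(T) + \langle P\rangle\bigr) = \langle R\rangle_n,
\]
where $\langle R\rangle_n$ denotes the degree-$n$ component of the homogeneous ideal $\langle R\rangle$. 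Applying this at $n=1$: since $R\subseteq V^{\otimes 2}$ we have $\langle R\rangle_1 = \{0\}$, so $V\cap(\mathbb{K} + \langle P\rangle)=\{0\}$ and \emph{a fortiori} $\langle P\rangle \cap V = \{0\}$, which is the right-hand side of (1). Applying it at $n=2$: since $\langle R\rangle_2 = R$, we have $V^{\otimes 2}\cap(\mathbb{K}\oplus V + \langle P\rangle)=R$. Now given $\xi\in\langle P\rangle\cap\{V\oplus V^{\otimes 2}\}$, decompose $\xi = v + w$ with $v\in V$ and $w\in V^{\otimes 2}$; then $w = \xi - v \in V^{\otimes 2}\cap(V + \langle P\rangle)\subseteq R$, so there exists $p\in P$ with $\pi(p) = w$, say $p = u + w$ for some $u\in V$. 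Subtracting, $\xi - p = v-u\in\langle P\rangle\cap V$, which is $\{0\}$ by the right-hand side of (1) just proved, giving $\xi = p\in P$.

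The main obstacle is the key identity used above: relating the abstract fact that $p$ is a graded isomorphism to the concrete intersection statements inside $T(V)$. This requires careful bookkeeping of the filtration $F_n(T)$, of the image of $V^{\otimes n}$ in $Gr(A)_n$, and of the identification of $\langle R\rangle_n$ with the obstructions to that image being free. Once this identity is in hand, the low-degree consequences for (1) and (2) follow as sketched, and the separate biconditionals are simply the decoupled statements of what the PBW conclusion looks like at degrees $1$ and $2$.
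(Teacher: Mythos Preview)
The paper does not supply its own proof of this corollary; it is quoted verbatim from Loday--Vallette with only the citation, so there is no in-paper argument to compare against. Your approach---handling the trivial $\Leftarrow$ directions directly, then invoking the preceding PBW proposition to obtain the isomorphism $p:B\xrightarrow{\sim}Gr(A)$ and reading off the conclusions from the degree-$1$ and degree-$2$ pieces via the identity $V^{\otimes n}\cap(F_{n-1}(T)+\langle P\rangle)=\langle R\rangle_n$---is correct and is in fact the argument Loday--Vallette give in their book.

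One point worth flagging: as you have written it, the $\Rightarrow$ directions are established only as a package, i.e.\ you assume \emph{both} left-hand sides in order to place yourself in the hypotheses of the preceding proposition, and then deduce both right-hand sides. You do not prove $(1)\Rightarrow$ from the hypothesis of (1) alone, nor $(2)\Rightarrow$ from the hypothesis of (2) alone. This is not a defect relative to the source---Loday--Vallette's Corollary~3.6.5 is itself stated as the equivalence of the pair of ``local'' conditions with the pair of ``global'' ones---but the enumerated presentation in this paper could be misread as two independent biconditionals, which your argument does not (and need not) supply. Your closing sentence about ``decoupled statements'' is slightly misleading on this point; the decoupling is only in the \emph{conclusion}, not in the hypotheses.
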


\begin{remark}\label{homogeneous version and const}  \cite{Braverman}, Lemma 0.4  establishes that if the algebra $T(V)/\langle P\rangle$ is a PBW deformation of $T(V)/\langle R\rangle$ then
it satisfies the following conditions:
\begin{enumerate}
\item[\rm (I)] $P\cap F_1(T) = 0$;
\item[\rm (J)] $(F_1(T)\cdot P\cdot F_1(T))\cap F_2(T) = P$.
\end{enumerate}
If a nonhomogeneous quadratic algebras  satisfy $(I)$ then the subspace $P\subset F_2(T)$ can be described in terms of two maps $\alpha: R\to V$ and $\beta: R\to \mathbb{K}$ as $P=\{ x-\alpha(x)-\beta(x)\mid x\in R\}$. If $A=T(V)/\langle P\rangle$ is a  PBW deformations of its homogeneous version then $P$ can not have relations in $F_1(T)$, so:
 \begin{enumerate}
 \item[\rm (i)] If $A$ is a  PBW deformation of some skew PBW extension $B$, then $A$ is constant.
 \item[\rm(ii)] The homogeneous version of a skew PBW extension $A$ is the  skew PBW extension $B$ such that the conditions $(i)$ and $(ii)$ of Definition \ref{def.skewpbwextensions} for $A$ are satisfy for $B$, and the conditions $(iii)$ and $(iv)$ are replaced by $x_jx_i-c_{i,j}x_ix_j=0$, where $c_{i,j}$ are the same that for $A$.
 \item[\rm(iii)]  The homogeneous version of a skew PBW extension is homogeneous Koszul.
 \end{enumerate}
 \end{remark}

 For example the homogeneous version for  the universal enveloping algebra of a Lie algebra $\mathcal{G}$, $\mathcal{U}(\mathcal{G})$ is the symmetric algebra $\mathbb{S}(\mathcal{G})$.

\begin{proposition}\label{prop.deformation of skew} Let $A$ be a constant skew PBW extension of a field $\mathbb{K}$. Then $A$ is a PBW deformation of its homogeneous version $B$.
\end{proposition}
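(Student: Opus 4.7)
The plan is to show that for a constant skew PBW extension $A$, the homogeneous version $B$ in the sense of Definition \ref{def.PBW deformation} coincides with the quasi-commutative extension $A^\sigma$ from Proposition \ref{remarkAsigma}, and then to invoke Theorem \ref{teo.Gr(A)}, which already identifies $Gr(A)$ with $A^\sigma$. The whole argument is essentially a compatibility check between two different descriptions of the same algebra.

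First I would set up the presentation. Taking $V = \bigoplus_{i=1}^n \mathbb{K}x_i$ so that $T(V) = \mathbb{K}\langle x_1,\dots,x_n\rangle$, the constant hypothesis makes the relations in (iii) of Definition \ref{def.skewpbwextensions} automatic (since $\sigma_i = \mathrm{id}$ and $\delta_i = 0$ on $\mathbb{K}$ force $x_i r = r x_i$). Hence by Remark \ref{notac.skewpbwext}(3), one may take $P \subseteq F_2(T)$ to be the $\mathbb{K}$-span of the defining relations
\[
x_jx_i - c_{i,j}x_ix_j - r_0^{(i,j)} - \sum_{t=1}^n r_t^{(i,j)} x_t, \qquad 1 \le i,j \le n,
\]
so that $A = T(V)/\langle P\rangle$. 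Applying the projection $\pi : F_2(T) \to V \otimes V$ yields $R = \pi(P) = \mathrm{span}\{x_jx_i - c_{i,j}x_ix_j\}$, and therefore the homogeneous version of $A$ is $B = T(V)/\langle R\rangle$.

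Next I would identify $B$ with $A^\sigma$. By Proposition \ref{remarkAsigma}, $A^\sigma$ is presented by the relations $z_i r = c_{i,r} z_i$ and $z_j z_i = c_{i,j} z_i z_j$; since $A$ is constant, the first family of relations reduces to $z_i r = r z_i$, which is already automatic in a $\mathbb{K}$-algebra. So $A^\sigma = \mathbb{K}\langle z_1,\dots,z_n\rangle/\langle z_jz_i - c_{i,j}z_iz_j\rangle$, which is precisely $B$ after relabeling $z_i \leftrightarrow x_i$. Now Theorem \ref{teo.Gr(A)} gives a graded isomorphism $A^\sigma \xrightarrow{\sim} Gr(A)$ that necessarily sends the degree-one generator $z_i$ to the class $[x_i] \in Gr(A)_1 = F_1(A)/F_0(A)$. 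Composed with $B \cong A^\sigma$, this is exactly the natural epimorphism $p : B \to Gr(A)$ of Definition \ref{def.PBW deformation}, since $p$ is characterized by sending $x_i \in B$ to $[x_i] \in Gr(A)_1$. Hence $p$ is an isomorphism and $A$ is a PBW deformation of $B$.

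The only real step requiring attention is the last one, namely verifying that the abstract isomorphism of Theorem \ref{teo.Gr(A)} agrees with the natural projection $p$ built from the quotient $T(V) \twoheadrightarrow A$; both maps are determined by their action on the generators $x_i$, so the check is short but should be made explicit to avoid leaving a gap. Everything else reduces to reading off the relations in the constant case and matching them to the data of $A^\sigma$.
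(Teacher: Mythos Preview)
Your argument is correct and follows essentially the same route as the paper: present $A$ via $P$, compute $R=\pi(P)$ to identify the homogeneous version $B$, observe that in the constant case $A^\sigma$ has exactly the relations $z_jz_i=c_{i,j}z_iz_j$ so $B\cong A^\sigma$, and then invoke Theorem \ref{teo.Gr(A)} to get $Gr(A)\cong A^\sigma\cong B$. Your final paragraph---checking that the isomorphism coming from Theorem \ref{teo.Gr(A)} is the natural projection $p$ of Definition \ref{def.PBW deformation}, not merely some abstract isomorphism---is actually a point the paper's proof passes over in silence, so in that respect your write-up is slightly more careful.
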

\begin{proof}
Let $A$ be a constant skew PBW extension of a field $\mathbb{K}$ then, $x_jx_i-c_{i,j}x_ix_j+r_0+r_1x_1+\cdots +r_nx_n$ (as in Definition \ref{def.skewpbwextensions}) are the generated relations of the subspace $P$, that is, $A=\mathbb{K}\langle x_1, \dots, x_n\rangle/\langle P \rangle$. Then the subspace $\pi(P) =R$ is generated by the relations $x_jx_i-c_{i,j}x_ix_j$, i.e, $\mathbb{K}\langle x_1, \dots, x_n\rangle/\langle R \rangle=B$ is the  homogeneous version of $A$. Now for Theorem \ref{teo.Gr(A)}, $Gr(A)\cong A^{\sigma}$ where $A^{\sigma}$ is a skew PBW extension of $\mathbb{K}$ in  $n$ variables $z_1,\dotsc, z_n$ defined by the relations  $z_jz_i=c_{i,j}z_iz_j$, for $1\le i\le n$. So by Remark \ref{homogeneous version and const},  $A^{\sigma}\cong B$ and therefore $Gr(A)\cong B$, i.e., $A$ is a PBW deformation of $B$.
\end{proof}

Note that if a skew PBW extension $A$ is not constant then the Proposition \ref{prop.deformation of skew} fails, indeed: the homogeneous version of $A$ is the skew PBW extension $B$ with relations $x_jx_i-c_{i,j}x_ix_j=0$, where $c_{i,j}$ are the same that for $A$, but $Gr(A)$ is defined defined by the relations $z_ir=c_{i,r}z_i$, $z_jz_i=c_{i,j}z_iz_j$ (see Theorem \ref{teo.Gr(A)} and Proposition \ref{remarkAsigma}), so  $Gr(A)\ncong B$.\\

Let $T(V)/\langle P\rangle$ be a nonhomogeneous
quadratic algebra. Take $R=p(P)\subseteq T^2(V)$ and consider the corresponding
homogeneous quadratic algebra $A=T(V)/\langle R\rangle$. The main theorem of \cite{Braverman} establishes that if $A$ is a homogeneous Koszul algebra then conditions $(I)$ and $(J)$ in Remark \ref{homogeneous version and const} imply that the  algebra $T(V)/\langle P\rangle$ is a PBW deformation of $A$.

\begin{proposition}
If $A$ is a PBW deformation of a skew PBW extension $B$, then $B$ is homogeneous Koszul.
\end{proposition}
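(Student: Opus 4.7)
The plan is to reduce the statement to Corollary \ref{cor. semi hom kosz} by showing that the hypotheses force $B$ to be a semi-commutative skew PBW extension.

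First, I would unpack what the PBW deformation hypothesis gives us about $B$. By Definition \ref{def.PBW deformation}, if $A = T(V)/\langle P\rangle$ is a PBW deformation of $B$, then $B = T(V)/\langle R\rangle$ where $R = \pi(P) \subseteq V \otimes V$. Thus $B$'s defining ideal is generated by \emph{purely quadratic} elements (no constant or linear terms).

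Second, I would combine this with the hypothesis that $B$ is a skew PBW extension to force semi-commutativity. Since $B$ is a skew PBW extension over the field $\mathbb{K}$, it is automatically constant (as noted in the text just before Proposition \ref{prop.pre-Koszul}), so relations of type (iii) in Definition \ref{def.skewpbwextensions} are trivial ($x_i r = r x_i$ for all $r \in \mathbb{K}$). The relations of type (iv) have the form
\[
x_j x_i - c_{i,j} x_i x_j + r_0 + r_1 x_1 + \cdots + r_n x_n,
\]
with $r_0, r_1, \ldots, r_n \in \mathbb{K}$ and $c_{i,j} \in \mathbb{K} \setminus \{0\}$. For these to lie in $V \otimes V$, the constant and linear parts must vanish, i.e., $r_0 = r_1 = \cdots = r_n = 0$. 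Hence each relation reduces to $x_j x_i - c_{i,j} x_i x_j = 0$, so $B$ is quasi-commutative; combined with constancy, $B$ is semi-commutative in the sense of Definition \ref{sigmapbwderivationtype}(e).

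Finally, Corollary \ref{cor. semi hom kosz} (which combines Proposition \ref{prop.PBW alg} and Theorem \ref{teo.PBW alg is Koszul}) immediately yields that the semi-commutative skew PBW extension $B$ is homogeneous Koszul, completing the proof. There is no real obstacle here: the only subtle point is the observation that ``skew PBW extension structure'' plus ``purely quadratic defining ideal'' forces the linear and constant correction terms in Definition \ref{def.skewpbwextensions}(iv) to vanish, which is exactly what it means to be semi-commutative over a field.
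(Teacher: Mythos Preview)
Your proposal is correct and follows essentially the same approach as the paper: both arguments show that $B$ must be a semi-commutative skew PBW extension of $\mathbb{K}$ and then invoke Corollary \ref{cor. semi hom kosz}. The only cosmetic difference is that the paper starts from the presentation of $A$ as $\sigma(\mathbb{K})\langle x_1,\dots,x_n\rangle$ and projects its relations to obtain $B \cong \mathbb{K}\langle x_1,\dots,x_n\rangle/\langle x_jx_i - c_{i,j}x_ix_j\rangle$, whereas you argue directly that the quadraticity of $R=\pi(P)$ forces the linear and constant terms in $B$'s own skew PBW relations to vanish; the substance is the same.
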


\begin{proof}
Let $A=\sigma(\mathbb{K})\langle x_1,\dots, x_n\rangle$ be a PBW deformation of $B$, then   $$A=\mathbb{K}\langle x_1,\dots, x_n\rangle/\langle x_jx_i-c_{i,j}x_ix_j+ k_0+k_1x_1+\cdots + k_nx_n\rangle,$$ with $c_{i,j}\in \mathbb{K} \ \backslash \ \{0\}$, $k_l\in \mathbb{K}$, $1\leq i,j\leq n$, $0\leq l\leq n$ and $B\cong \mathbb{K}\langle x_1,\dots, x_n\rangle/\langle x_jx_i-c_{i,j}x_ix_j\rangle$. Then $B$ is a semi-commutative skew PBW extension of $\mathbb{K}$, and by Corollary \ref{cor. semi hom kosz}, we conclude that $B$ is homogeneous Koszul.
\end{proof}

\begin{center}
\textbf{Acknowledgements}
\end{center}
The authors express their gratitude to Professor Oswaldo Lezama for valuable suggestions and
helpful comments for the improvement of the paper.

\end{document}